\documentclass[english]{amsart}
\usepackage[latin1]{inputenc}
\usepackage{amsthm,amsmath,amssymb,mathrsfs}
\usepackage{hyperref}
\theoremstyle{plain}
\newtheorem{thm}{Theorem}[section]

\newtheorem{prop}[thm]{Proposition}

\newtheorem{lem}[thm]{Lemma}

\newtheorem{cor}[thm]{Corollary}
\theoremstyle{definition}
\newtheorem{prob}[thm]{Problem}

\theoremstyle{remark}

\newcommand{\eqdef}{\stackrel{\rm def}{=}}
\usepackage{babel}

\DeclareMathOperator{\inv}{inv}
\DeclareMathOperator{\Des}{Des}
\DeclareMathOperator{\HDes}{HDes}
\DeclareMathOperator{\maj}{maj}
\DeclareMathOperator{\fmaj}{fmaj}
\DeclareMathOperator{\Aut}{Aut}
\newcommand{\si}{\sigma}

\newcommand{\eps}{\epsilon}

\marginparwidth 0pt   \marginparsep 0pt
\oddsidemargin -0.1in \evensidemargin 0pt
\textwidth 6.5in
\textheight 8.5in
\author{Fabrizio Caselli}
\title{Signed mahonians on some trees and parabolic quotients}

\begin{document} 
\maketitle
\begin{center}
   \emph{\footnotesize Dipartimento di matematica, Universit\`a di Bologna,\\ P.zza di Porta S. Donato 5, Bologna 40126, Italy}
\end{center}

\begin{abstract}
We study the distribution of the major index with sign on some parabolic quotients of the symmetric group, extending and generalizing simultaneously results of Gessel--Simion and Adin--Gessel--Roichman, and on the labellings of some special trees that we call rakes. We further consider and compute the distribution of the flag-major index on some parabolic quotients of wreath products and other related groups. All these distributions turn out to have very simple factorization formulas.
\end{abstract}

\section{Introduction} Let $W$ be a finite reflection group of rank $n$ and $S$ be a set of simple reflections for $W$ as a Coxeter group. If $\ell$ denotes the length function on $W$ with respect to $S$, the distribution of $\ell$ on $W$ is called the Poincar\'e polynomial and it is a classical result of  Chevalley \cite{Ch} and Solomon \cite{So} in different level of generalizations that
\begin{equation}\label{mahonian}
\sum_{u\in W}q^{\ell(u)}=[d_1]_q[d_2]_q\cdots[d_n]_q,
\end{equation}
where $d_1,\ldots,d_n$ are the fundamental degrees of $W$. The main problems faced in this work are variations of this identity (with statistics other than the length function) to parabolic quotients of $W$ and labellings of trees in the sense of \cite{BW}. 

If $J\subset S$ we denote by $W_J$ and $\textrm{\raisebox{6pt}{$_J$}}\!W$ the corresponding parabolic subgroup and (left) parabolic quotient. It is well-known that given $u\in W$ there exists a unique decomposition $
u=u_J\cdot \textrm{\raisebox{5pt}{$_J$}}\!u,$ where $u_J\in W_J$ and $\textrm{\raisebox{5pt}{$_J$}}\!u\in\, \textrm{\raisebox{5pt}{$_J$}}\!W$; it is also well-known that in this decomposition we have 
\begin{equation}\label{additi}\ell(u)=\ell(u_J)+\ell(\textrm{\raisebox{5pt}{$_J$}}\!u).
\end{equation}
It follows from Equation \eqref{additi} that  the distribution of the length function on the parabolic quotient $\textrm{\raisebox{6pt}{$_J$}}W$ is given by
\begin{equation}\label{decom}
\sum_{u\in \textrm{\raisebox{5pt}{$_J$}}\!W}q^{\ell(u)}=\frac{\sum_{u\in W}q^{\ell(u)}}{\sum_{u\in W_J}q^{\ell(u)}}. 
\end{equation}
In particular, if $W=S_n$ is the symmetric group on $n$ letters, $S$ is identified with the set of positive integers $\{1,2,\ldots,n-1\}$ and $J=\{n-k+1,\ldots,n-1\}$, then $W_J$ is clearly isomorphic as a Coxeter group to $S_k$ and therefore
$$
\sum_{u\in \mathscr S_{n,k}}q^{\ell(u)}=[k+1]_q[k+2]_q\cdots[n]_q,
$$
where $\mathscr S_{n,k}\eqdef \raisebox{5pt}{$_J$}\!W$, since the fundamental degrees of $S_n$ are $2,3,\ldots,n$.

It is  a classical result of MacMahon \cite{M} that the major index is equidistributed with the length function on $S_n$, i.e. $\sum_{\si\in S_n}q^{\maj(\si)}=\sum_{\si\in S_n}q^{\ell(\si)}$.  As one can easily verify, Equation \eqref{additi} is no longer satisfied with $\maj$ in the place of $\ell$. Nevertheless, as an immediate consequence of classical results of Stanley \cite{St} and Foata-Sch\"utzenberger \cite{FS} the statistics $\maj$ and $\inv$ remain equidistributed on all parabolic quotients $\textrm{\raisebox{6pt}{$_J$}}\!W$, and in particular
\begin{equation}\label{greta}
\sum_{u\in \mathscr S_{n,k}}q^{\maj(u)}=[k+1]_q[k+2]_q\cdots[n]_q.
\end{equation}
Major index and inversion number can also be defined for labellings $w\in\mathscr W(F)$ of a forest $F$ (see \cite{BW}). In particular, one can consider a particular tree $R_{n,k}$, that we call a rake, and the set of equivalence classes of labellings $\mathscr R_{n,k}$, by the action of the automorphism group of $R_{n,k}$ (see section \ref{rakes} for more details), such that the distribution of the major index is still
\begin{equation}\label{bjwa}
 \sum_{w\in \mathscr R_{n,k}}q^{\maj(w)}=[k+1]_q[k+2]_q\cdots[n]_q.
\end{equation}
We observe that Equations \eqref{greta} and \eqref{bjwa} are trivially equivalent for $k=1$ only, while there is no bijective explanation for this equidistribution for $k>1$.
The main target of this work is to study the signed versions of the distribution of $\maj$ on $\mathscr S_{n,k}$ and on $\mathscr R_{n,k}$, i.e. we study the polynomials
\[
s_{n,k}(q)\eqdef \sum_{\si\in \mathscr S_{n,k}}(-1)^{\inv(\si)} q^{\maj(\si)}
\qquad \textrm{and}\qquad
  r_{n,k}(q)\eqdef\sum_{w\in \mathscr R_{n,k}}(-1)^{\inv(w)} q^{\maj(w)}.
\]

If $k=1$  these polynomials are computed by a well-known formula of Gessel and Simion (see \cite{Wa} for an elegant bijective proof based on its unsigned version \eqref{mahonian}):
$$
s_{n,1}(q)=r_{n,1}(q)=\sum_{\si\in S_n}(-1)^{\inv(\si)} q^{\maj(\si)}=[2]_{-q}[3]_{q}\cdots[n]_{(-1)^{n-1}q},
$$
which is a sort of an alternating version of Equation \eqref{mahonian}, and the first main result here is the following alternating version of Equation \eqref{greta}  that include the Gessel-Simion formula as a special case
\begin{equation}\label{gessimgen}
  s_{n,k}(q)=[k+1]_{(-1)^{nk+n+k}q}[k+2]_{(-1)^{k+1}q}[k+3]_{(-1)^{k+2}q}\cdots[n]_{(-1)^{n-1}q}.
\end{equation}
As for the polynomial $r_{n,k}(q)$, we show that $r_{n,k}(q)=s_{n,k}(q)$ unless $n$ is odd and $k$ is even, (in which case $s_{n,k}(q)$ does not seem to factorize nicely at all) strengthening the fact that these two families of combinatorial objects are strictly related.
The nice combinatorial and bijective methods used in \cite{FS,St,Wa} for the corresponding unrestricted results can not be easily generalized to the present context in the computation of $s_{n,k}(q)$ whilst an idea appearing in \cite{AGR} will be of some help. We also mention here that the present proof of Equation \eqref{gessimgen} is rather involved and full of technicalities (many of which will be omitted), so that one could say that it proves the result without really explaining it; and it would be really desirable to find an algebraic explanation for it, or at least a simple combinatorial proof.

Another possible generalization of Equation \eqref{greta} considers special classes of ``parabolic'' subgroups for complex reflection groups. In fact, if $W$ is a complex reflection group, although one can define a ``length function" with respect to some generating set of pseudo-reflections, this concept lacks algebraic significance and in particular Equation \eqref{mahonian} is no longer valid. Nevertheless, for wreath products $G(r,n)$  of the cyclic group $C_r$ with $S_n$, and in particular for Weyl groups of type $B$, there is a natural counterpart of the major index called the flag-major index. This index has been introduced in  \cite{AR} and has the following distribution
$$
\sum_{g\in G(r,n)}q^{\fmaj(g)}=[d_1]_q[d_2]_q\cdots[d_n]_q,
$$
where $d_i=ri, i=1,2,\ldots,n$ are the fundamental degrees of $G(r,n)$; it is therefore natural to ask whether one can extend Equation \eqref{greta} to wreath products $G(r,n)$. This problem is solved in this paper as a particular case of a more general result involving a wider class of groups and using some machinery developed by the author in \cite{Ca1} in the study of some aspects of the invariant theory of complex reflection groups. 

\section{Notation and preliminaries}\label{nota}
In this section we collect the notations that are used in this paper.
If $r,n\in \mathbb N$ we let $[n]\eqdef\{1,2,\ldots,n\}$ and $\mathbb Z_r\eqdef \mathbb Z /r\mathbb Z$. We let $\mathcal P _n\eqdef \{\lambda=(\lambda_1,\ldots,\lambda_n)\in \mathbb N^n:\, \lambda_1\geq \lambda_2 \geq \cdots \geq \lambda_n\}$ be the set of \emph{partitions} of \emph{length} at most $n$.
If $q$ is an indeterminate and $n\in \mathbb N$ we let $[n]_q\eqdef\frac{1-q^n}{1-q}=1+q+q^2+\cdots+q^{n-1}$ be the $q$-\emph{analogue} of $n$ and $[n]_q!\eqdef [1]_q[2]_q\cdots [n]_q$. 
A permutation $\si \in S_n$ will be denoted by $\si=[\si(1),\cdots, \si(n)].$ We denote by $\inv(\si)\eqdef|\{(i,j):i<j\textrm{ and } \si(i)>\si(j)\}$ the number of \emph{inversions} of $\si$ and by $\maj(\si)\eqdef \sum_{i:\si(i)>\si(i+1)}i$  the \emph{major index} of $\si$.

According to \cite{BW} we say that a poset $P$ is a \emph{forest} if every element of $P$ is covered by at most one element. If $P$ is a finite forest with $n$ elements we let $\mathscr W(P)\eqdef \{w:P\rightarrow \{1,2,\ldots,n\}\textrm{ such that $w$ is a bijection}\}$ be the set of \emph{labellings} of $P$. If $w$ is a labelling of a forest $P$ we say that a pair $(x,y)$ of elements of $P$ is an inversion of $w$ if $x<y$  and $w(x)>w(y)$ and we let $\inv(w)$ be the number of inversions of $w$; we say that $x\in P$ is a \emph{descent} of $w$ if $x$ is covered by an element $y$ and $w(x)>w(y)$, and we denote by $\Des(w)$ the set of descents of $w$; if $x\in P$ we let $h_x=|\{a\in P:a\leq x\}|$ be the \emph{hook length} of $x$ and 
\[\maj(w)=\sum_{x\in \Des(w)}h_x
\]
be the \emph{major index} of $w$. Finally, if $w$ is a labelling of a forest $P$ we let 
\[\mathscr L(w)=\{\sigma\in S_n:\, \textrm{if $x<y$ then $\sigma^{-1}(w(x))<\sigma^{-1}(w(y))$} \}
\] 
be the set of \emph{linear extensions} of $w$. The main results in \cite{BW} we are interested in are summarized in the following result.
\begin{thm}
   If $P$ is a forest then
\begin{equation}\label{bw1}
   \sum_{w\in \mathscr W(P)}q^{\maj(w)}=\frac{n!}{\prod_{x\in P}h_x}\prod_{x\in P}[h_x]_q
\end{equation}
and if $w\in \mathscr W(P)$ then 
\begin{equation}\label{bw2}
   \sum_{\sigma \in \mathscr L(w)}q^{\maj(\sigma)}=q^{\maj(w)}\frac{[n]_q!}{\prod_{x\in P}[h_x]_q}.
\end{equation}
\end{thm}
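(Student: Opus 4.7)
The plan is to establish \eqref{bw2} first and deduce \eqref{bw1} from it by averaging. Summing \eqref{bw2} over $w \in \mathscr W(P)$ and swapping summations on the left, one observes that for each fixed $\sigma \in S_n$ the labellings $w$ with $\sigma \in \mathscr L(w)$ are precisely those for which $\sigma^{-1}\circ w: P \to [n]$ is a natural (order-preserving) bijective labelling of $P$; these are counted by $n!/\prod_x h_x$, the hook-length formula for the number of linear extensions of a forest. This gives
$$\frac{n!}{\prod_x h_x}\cdot [n]_q! \;=\; \frac{[n]_q!}{\prod_{x}[h_x]_q}\sum_{w \in \mathscr W(P)}q^{\maj(w)},$$
which rearranges to \eqref{bw1}.

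For \eqref{bw2}, I would invoke Stanley's theory of $(P,w)$-partitions: a $(P,w)$-partition is an order-reversing map $f:P\to\mathbb N$ with $f(x)>f(y)$ whenever $x$ is covered by $y$ and $x\in\Des(w)$, and $f(x)\geq f(y)$ otherwise. Stanley's fundamental theorem of $P$-partitions decomposes the set of such maps into pieces indexed by $\mathscr L(w)$, yielding
$$\sum_{f}q^{|f|} \;=\; \frac{1}{\prod_{i=1}^{n}(1-q^i)}\sum_{\sigma\in\mathscr L(w)}q^{\maj(\sigma)}, \qquad |f|\eqdef \sum_{x}f(x).$$
On the other hand, for a forest one can evaluate the left-hand side directly via the change of variables $g(x) \eqdef f(x) - f(p(x)) - \eps_x$ for non-root $x$ (with $p(x)$ the unique parent and $\eps_x = 1$ if $x \in \Des(w)$, $0$ otherwise), and $g(r) \eqdef f(r)$ for roots $r$. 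The $g(x)$'s then range independently over $\mathbb N$, and $f(x) = \sum_{y\geq x}(g(y)+\eps_y)$, so $|f| = \sum_{y}(g(y)+\eps_y)\,h_y$. The independence of the $g$'s then gives
$$\sum_f q^{|f|} \;=\; \prod_{x\in P}\frac{q^{\eps_x h_x}}{1-q^{h_x}} \;=\; \frac{q^{\maj(w)}}{\prod_{x\in P}(1-q^{h_x})}.$$
Equating the two expressions for $\sum_f q^{|f|}$ and simplifying via $1-q^i = (1-q)[i]_q$ yields \eqref{bw2}.

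The decisive step --- and the reason the forest hypothesis is essential --- is the decoupling of the $g(x)$'s in the change of variables: each non-root vertex has a unique cover, so the strict/weak constraints become independent per-vertex conditions, giving a genuine product over vertices. For a general poset this decoupling fails, and the $P$-partition generating function does not factor as such a hook product.
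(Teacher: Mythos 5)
Your argument is correct. Note first that the paper itself gives no proof of this theorem: it is stated verbatim as a summary of results quoted from Bj\"orner--Wachs \cite{BW}, so there is no internal proof to compare against; what you have written is essentially the classical Stanley $(P,\omega)$-partition argument, which is also the route taken in the cited source. The two halves of your proof check out: the change of variables $g(x)=f(x)-f(p(x))-\eps_x$ telescopes to $f(x)=\sum_{y\geq x}(g(y)+\eps_y)$ and hence $|f|=\sum_y (g(y)+\eps_y)h_y$, the $g$'s are independent precisely because each non-root has a unique parent, and comparing with the fundamental theorem of $(P,\omega)$-partitions gives \eqref{bw2}; summing over $w$ and identifying $\{w:\sigma\in\mathscr L(w)\}$ with the order-preserving bijections $P\to[n]$ via $w\mapsto\sigma^{-1}\circ w$ gives \eqref{bw1}. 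Two small points worth making explicit. First, your definition of a $(P,w)$-partition imposes strictness only at covering descents, whereas the fundamental theorem is usually stated with the condition ``$f(x)>f(y)$ whenever $x<y$ and $w(x)>w(y)$''; the two are equivalent because along any saturated chain from $x$ to $y$ with $w(x)>w(y)$ some consecutive pair is a descent, so the strict inequality propagates. Second, invoking the hook-length count $e(P)=n!/\prod_x h_x$ in the deduction of \eqref{bw1} is not circular: it is exactly the $q=1$ specialization of \eqref{bw2}, since $|\mathscr L(w)|=e(P)$ for every $w$, so your argument is in fact self-contained.
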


If $r>0$,  a \emph{$r$-colored integer} is a pair $(i,z)$, denoted also $i^z$, where $i\in \mathbb N$ and $z\in \mathbb Z_r$, and we let $|i^z|\eqdef i$. The {\em wreath product} $G(r,n)\eqdef\mathbb Z_r \wr S_n$ is the group of permutations $g$ of the set of $r$-colored integers $i^z$, where $i\in [n]$ and $z\in \mathbb Z_r$ such that, if $g(i^0)=j^z$ then $g(i^{z'})=j^{z+z'}$. An element $g\in G(r,n)$ is therefore uniquely determined by the $r$-colored integers $g(1^0), \ldots, g(n^0)$ and we usually write $g=[\si_1^{z_1},\ldots,\si_n^{z_n}]$, where $g(i^0)=\si_i^{z_i}$. Note that in this case we have that $|g|\eqdef[\si_1, \ldots, \si_n]\in S_n$. When it is not clear from the context, we will denote $z_i$ by $z_i(g)$. 

For $p|r$ the {\em complex reflection group} $G(r,p,n)$ is the normal subgroup of $G(r,n)$ defined by
\begin{equation}\label{def-grpn}
G(r,p,n):=\{[\si_1^{z_1},\ldots,\si_n^{z_n}]\in G(r,n) \mid z_1+\cdots+z_n\equiv 0 \mod p\}.
\end{equation}
and its \emph{dual group} 
\begin{equation}\label{def-grpqn}
G(r,p,n)^*=G(r,n)/ C_p,
\end{equation}
where $C_p$ is the cyclic subgroup of $G(r,n)$ of order $p$ generated by $[1^{r/p},2^{r/p},\ldots,n^{r/p}]$.

The study of permutation statistics  has found a new interest in the more general setting of complex reflection groups after the work of Adin and Roichman \cite{AR}. Some of these results have been generalized in \cite[\S 5]{Ca1} in the following way.

For $g=[\si_1^{z_1},\ldots,\si_n^{z_n}]\in G(r,p,n)^*$ we let
\begin{align*}
\HDes(g)&\eqdef \{i\in [n-1]:\, z_i=z_{i+1} \textrm{, and }\si(i)>\si(i+1)\}\\
h_i(g)&\eqdef \#\{j\geq i:\,j\in \HDes(g)\}.
\end{align*}
We also let $(k_1(g),\ldots,k_n(g))$ be the smallest  element in $\mathcal P_n$ (with respect to the entrywise order) such that $g=[\si_1^{k_1(g)},\ldots,\si_n^{k_n(g)}]$, where we make slight abuse of notation identifying an integer with its residue class in $\mathbb Z_r$. In other words, $(k_1(g),\ldots,k_n(g))$ is a partition characterized by the following property: if $(\beta_1, \ldots, \beta_n)\in \mathcal P_n$  is such that $g=[\si_1^{\beta_1},\ldots,\si_n^{\beta_n}]$, then  $\beta_i\geq k_i(g)$, for all $i\in[n]$. 
For example, let $g=[2^2,7^3,6^3,4^5,8^1,1^1,5^3,3^2]\in G(6,3,8)^*$. Then $\HDes(g)=\{2,5\}$, $(h_1,\ldots,h_8)=(2,2,1,1,1,0,0,0)$ and $(k_1,\ldots,k_8)=(18,13,13,9,5,5,1,0)$.

We note that if we let $\lambda_i(g)\eqdef r\cdot h_i(g)+k_i(g),$ then the sequence $\lambda(g)\eqdef(\lambda_1(g),\ldots,\lambda_n(g))$ is a partition such that $g=[\si_1^{\lambda_1(g)},\ldots,\si_n^{\lambda_n(g)}]$. The {\em flag-major index} of an element $g\in G(r,p,n)^*$ is defined by  $\fmaj(g)\eqdef |\lambda(g)|.$
All these definitions are valid for wreath products $G(r,n)$ just by letting $p=1$ and one can easily verify that for $r=p=1$ $\fmaj(\si)=\maj(\si)$ for all $\si\in S_n$.

\section{Signed mahonians in parabolic quotients of symmetric groups}

For $n>0$ and $k=0,\ldots, n$ we let $J=\{n-k+1,\ldots,n-1\}$ and, following \cite[\S 2.4]{BB}, we let $$\mathscr S_{n,k}\eqdef \textrm{\raisebox{6pt}{$_J$}}\!S_n=\{\sigma\in S_n:\, \sigma^{-1}(n-k+1)<\sigma^{-1}(n-k+2)<\cdots<\sigma^{-1}(n)\},$$
be the corresponding left parabolic quotient.
We observe that if $k=0,1$ then $J=\emptyset$ and so $\mathscr S_{n,k}=S_n$; moreover, if $\si\in \mathscr S_{n,k}$ then $\si(n)\in\{1,2,\ldots,n-k,n\}$ and we set
$$
s(\si)\eqdef\begin{cases} \si(n)-1,&\textrm{if }\si(n)\in[n-k];\\ n-k,&\textrm{otherwise.}\end{cases}
$$
We consider the following generating function
$$
s_{n,k}(q,z)\eqdef\sum_{\si\in \mathscr S_{n,k}}(-1)^{\inv(\sigma)}q^{\maj(\si)}z^{s(\si)}.
$$
We will be mainly interested in the special evaluation $s_{n,k}(q)\eqdef s_{n,k}(q,1)$ but we will eventually find an explicit formula for the whole bivariate generating function $s_{n,k}(q,z)$.
Observe that, by definition, $s_{n,0}(q,z)=s_{n,1}(q,z)$ and we recall that this generating function has already been computed by Adin-Gessel-Roichman \cite{AGR}. We also extend an idea appearing in \cite{AGR} to prove the following result, where we let $\eps\eqdef -1$.

\begin{thm}\label{recur}Let $k\in [n-1]$. Then
$$s_{n,k}(q,z)=\frac{1}{1+z}\Big(\big(\eps^k z^{n-k} +(-q)^{n-1}\big)s_{n-1,k}(q,1)+\eps^n z (1-q^{n-1}) s_{n-1,k}(q,-z)\Big)+z^{n-k} s_{n-1,k-1}(q,1).$$
\end{thm}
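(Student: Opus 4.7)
The plan is to decompose $\mathscr S_{n,k}$ by conditioning on the last entry $\sigma(n)$. Since the values $n-k+1,\ldots,n$ appear in $\sigma$ in increasing order of position, one necessarily has $\sigma(n)\in[n-k]\cup\{n\}$, giving two qualitatively different cases.

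The case $\sigma(n)=n$ is easy: restriction to $[n-1]$ gives a bijection with $\mathscr S_{n-1,k-1}$ preserving both $\inv$ and $\maj$, because position $n-1$ cannot be a descent when $\sigma(n)=n$ is the maximum. Since $s(\sigma)=n-k$ throughout this fiber, the contribution is precisely $z^{n-k}s_{n-1,k-1}(q,1)$, accounting for the last summand of the recursion.

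For the main case $\sigma(n)=j\in[n-k]$, I would standardize $\sigma$ to the pair $(\sigma',j)$, where $\sigma'\in\mathscr S_{n-1,k}$ is obtained by deleting position $n$ and decrementing every value greater than $j$. A direct check gives
\[
\inv(\sigma)=\inv(\sigma')+(n-j)\quad\text{and}\quad \maj(\sigma)=\maj(\sigma')+(n-1)\mathbf{1}[\sigma'(n-1)\geq j].
\]
Splitting the inner weight via $q^{(n-1)\mathbf{1}[\cdot]}=q^{n-1}+(1-q^{n-1})\mathbf{1}[\sigma'(n-1)<j]$ produces a piece proportional to $s_{n-1,k}(q,1)$ plus a correction $U(j)=\sum_{\sigma'(n-1)<j}(-1)^{\inv(\sigma')}q^{\maj(\sigma')}$. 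Setting $A_m:=\sum_{\sigma'(n-1)=m}(-1)^{\inv(\sigma')}q^{\maj(\sigma')}$ and swapping the order of summation in $j$ and $m$ turns the $j$-sum into $\sum_{m=1}^{n-k-1}A_m G_m$ with $G_m=\sum_{j=m+1}^{n-k}(-1)^{n-j}z^{j-1}$, evaluable as a geometric series in $-z$.

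The crux---and the main obstacle---is to repackage $\sum_{m=1}^{n-k-1}A_m(-z)^m$ and $\sum_{m=1}^{n-k-1}A_m$ in terms of $s_{n-1,k}(q,-z)$ and $s_{n-1,k}(q,1)$. This is delicate because the fiber $A_{n-1}$ (over $\sigma'(n-1)=n-1$) appears in $s_{n-1,k}(q,z)$ weighted by the nonstandard exponent $z^{n-k-1}$ rather than $z^{n-2}$, yet always satisfies $\sigma'(n-1)\geq j$ and thus never contributes to any $U(j)$. The payoff of the peculiar definition of $s(\sigma)$ is precisely that the two stray $A_{n-1}$ contributions arising from these substitutions cancel, leaving a clean expression in $s_{n-1,k}(q,1)$ and $s_{n-1,k}(q,-z)$ alone; this is why the auxiliary variable $z$ is needed at all. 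A final sign cleanup using $(-1)^n(-z)^{n-k}=\eps^k z^{n-k}$ and $(-1)^{n+1}q^{n-1}=(-q)^{n-1}$ then collects the terms into the stated recursion.
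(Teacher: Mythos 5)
Your proposal is correct and follows essentially the same route as the paper: the same decomposition of $\mathscr S_{n,k}$ by the value of $\sigma(n)$, the same standardization bijection with the same transfer rules for $\inv$, $\maj$ and $s$, and the same geometric-series evaluation identifying the result with $s_{n-1,k}(q,1)$ and $s_{n-1,k}(q,-z)$. The paper merely organizes the inner sum by the break point $s(\sigma_0)$ directly rather than fibering over $\sigma'(n-1)$ and cancelling the stray $A_{n-1}$ terms, but this is only a difference in bookkeeping, and your observation about why the top fiber drops out is exactly the role the definition of $s$ plays in the paper's version.
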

\begin{proof}
We need to construct the set $\mathscr S_{n,k}$ from analogous sets in $S_{n-1}$. For this it will be helpful the following notation: if $\si\in S_n$ we let $\si_0\in S_{n-1}$ given by
$$
\si_0(i)\eqdef\begin{cases}\si(i),&\textrm{if }\si(i)<\si(n);\\ \si(i)-1,& \textrm{if }\si(i)>\si(n),\end{cases}
$$
for all $i\in [n-1]$. In other words $\si_0$ is obtained from $\si$ by deleting the last entry and rescaling  the others.
Now it is clear that the map $\si\mapsto \si_0$ establishes a bijection between $\{\si\in \mathscr S_{n,k}:\, \si(n)=n\}$ and $\mathscr S_{n-1,k-1}$, and also between the sets $\{\si\in \mathscr S_{n,k}:\,\si(n)=i<n\}$ and $\mathscr S_{n-1,k}$, for all $i\in [n-k]$. So, the map $\sigma\mapsto(\sigma_0,\si(n))$ establishes an explicit bijection
$$
\mathscr S_{n,k}\longleftrightarrow  (\mathscr S_{n-1,k}\times [n-k])\sqcup (\mathscr S_{n-1,k-1}\times\{n\}) ,
$$
where $\sqcup$ denotes disjoint union. In this bijection, if $\sigma\leftrightarrow (\sigma_0,i)$, with $\sigma_0\in \mathscr S_{n-1,k}$ and $i\in[n-k]$, then
\[
\begin{cases}
\inv(\sigma)=\inv(\sigma_0)+n-i\\ \maj(\si)=\begin{cases}\maj(\si_0)+n-1& \textrm{if }i\leq \si_0(n-1)\\ \maj(\si_0)&\textrm{if }i> \si_0(n-1)\end{cases}\\ s(\si)=i-1,
\end{cases}
\]

and if $\sigma\leftrightarrow (\si_0,n)$ with $\si_0\in \mathscr S_{n-1,k-1}$ then
\[\begin{cases}\inv(\sigma)=\inv(\sigma_0)\\ \maj(\si)=\maj(\si_0)\\ s(\si)=n-k\end{cases} .
\]

We use the bijection above and these equations to compute recursively the polynomial $s_{n,k}(q,z)$: we easily obtain

\[
   s_{n,k}(q,z)=\sum_{\si_0\in \mathscr S_{n-1,k}}\eps^{\inv(\si_0)+n-1}q^{\maj(\si_0)}\Big(q^{n-1}\sum_{j=0}^{s(\si_0)}(\eps z)^j+\sum_{j=s(\si_0)+1}^{n-k-1}( \eps z)^j\Big)+z^{n-k}s_{n-1,k-1}(q,1).
\]
Computing the geometric sums on the right-hand side we then conclude that
\begin{align*}
s_{n,k}(q,z)&=\sum_{\si\in \mathscr S_{n-1,k}}\eps^{\inv(\si)+n-1}q^{\maj(\si)}\Big(q^{n-1}\frac{1-(\eps z)^{s(\si)+1}}{1-\eps z}+\frac{(\eps z)^{s(\si)+1}-(\eps z)^{n-k}}{1-\eps z}\Big)+z^{n-k}s_{n-1,k-1}(q,1)\\
&=\frac{1}{1+z}\Big(\big(\eps^k z^{n-k} +(-q)^{n-1}\big)s_{n-1,k}(q,1)+\eps^n z (1-q^{n-1}) s_{n-1,k}(q,\eps z)\Big)+z^{n-k} s_{n-1,k-1}(q,1).\qedhere
\end{align*}
\end{proof}
Theorem \ref{recur} can be used to compute the polynomials $s_{n,k}(q,z)$ taking as initial condition $s_{n,n}(q,z)=1$ for all $n>0$ and recalling that $s_{n,0}(q,z)=s_{n,1}(q,z)$. As mentioned in the introduction, the special evaluation of the polynomials $s_{n,k}(q,z)$ at $z=1$ will have the following nice factorization
$$
s_{n,k}(q)=[k+1]_{\eps^{k+n+nk}q} [k+2]_{\eps^{k+1}q} [k+3]_{\eps^{k+2}q}\cdots [n]_{\eps^{n-1}q}.
$$
In particular, for $k=1$, we find the Gessel-Simion formula $s_{n,1}(q,1)=[2]_{-q}[3]_q\cdots[n]_{\eps^{n-1}q}$.

Unfortunately, the polynomials $s_{n,k}(q,z)$ do not factorize in general as nicely as their specializations at $z=1$ (at least if $k$ is even).
 Nevertheless, Theorem \ref{recur} allows us to prove explicit formulas for these polynomials. We start with a simple example.
\begin{lem}\label{n,n-1}
For all $n>0$ we have $s_{n,n-1}(q,z)=z[n-1]_{-q}+(-q)^{n-1}$ and in particular $s_{n,n-1}(q,1)=[n]_{-q}$.
\end{lem}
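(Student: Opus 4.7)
The plan is to argue by induction on $n$ using Theorem~\ref{recur} applied with $k=n-1$. The key simplification is the initial condition $s_{n-1,n-1}(q,z)=1$, since $\mathscr S_{n-1,n-1}$ contains only the identity; this trivialises two of the three summands on the right-hand side of the recursion.

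Substituting $k=n-1$ (so that $n-k=1$) into Theorem~\ref{recur}, and using that every evaluation of $s_{n-1,n-1}$ is equal to $1$, yields
\[
s_{n,n-1}(q,z)=\frac{1}{1+z}\Big(\eps^{n-1}z+(-q)^{n-1}+\eps^n z(1-q^{n-1})\Big)+z\cdot s_{n-1,n-2}(q,1).
\]
Because $\eps=-1$, the two $z$-linear terms $\eps^{n-1}z+\eps^n z$ cancel. A quick parity check on $n$ then shows that the remaining numerator $(-q)^{n-1}-(-1)^n z\,q^{n-1}$ factors in both parities as $(-q)^{n-1}(1+z)$, which cancels the denominator. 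What is left is
\[
s_{n,n-1}(q,z)=(-q)^{n-1}+z\cdot s_{n-1,n-2}(q,1).
\]

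Invoking the inductive hypothesis $s_{n-1,n-2}(q,1)=[n-1]_{-q}$ yields the first claim, and setting $z=1$ gives $[n-1]_{-q}+(-q)^{n-1}=[n]_{-q}$, as asserted. For the base case I take $n=2$ and compute directly on $\mathscr S_{2,1}=S_2$, obtaining $s_{2,1}(q,z)=z-q=z[1]_{-q}+(-q)^1$, in agreement with the formula.

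I do not anticipate any real obstacle: the argument is a one-step induction, and the only mildly delicate point is the parity case split needed to see that $(1+z)$ divides $(-q)^{n-1}-(-1)^n z q^{n-1}$, which is immediate in both cases. As a sanity check one can also verify the formula combinatorially: the elements of $\mathscr S_{n,n-1}$ are parametrised by the position $i$ of the entry $1$, the corresponding permutation has $\inv=\maj=i-1$, and $s$ equals $1$ if $i<n$ and $0$ if $i=n$, which reproduces the same closed form.
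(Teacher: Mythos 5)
Your proof is correct, but your main argument takes the route the paper explicitly declines: the paper says the lemma ``can be proved by induction using Theorem \ref{recur} but it is simpler to provide a direct proof,'' and then simply lists $\mathscr S_{n,n-1}=\{[1\,2\cdots n],[2\,1\,3\cdots n],\ldots,[2\,3\cdots n\,1]\}$ and reads off the statistics --- which is precisely your closing ``sanity check'' (position $i$ of the entry $1$ gives $\inv=\maj=i-1$ and $s=1$ for $i<n$, $s=0$ for $i=n$). Your inductive computation is sound: with $k=n-1$ the terms $\eps^{n-1}z+\eps^{n}z$ cancel, and the remaining numerator is $(-q)^{n-1}+\eps^{n+1}zq^{n-1}=(-q)^{n-1}(1+z)$, so no parity case split is actually needed --- the factorization is a single uniform identity. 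The inductive route has the modest virtue of exercising the recursion of Theorem \ref{recur} before it is used in earnest in Theorem \ref{main}, whereas the direct enumeration is shorter and self-contained; either is acceptable here. Two trivial points: the statement covers $n=1$ (where $s_{1,0}=s_{1,1}=1=z[0]_{-q}+(-q)^{0}$), which your base case $n=2$ skips, and you should note that applying the recursion at $k=n-1$ uses the initial condition $s_{n-1,n-1}(q,z)=1$ exactly as the paper prescribes.
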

\begin{proof}
This can be proved by induction using Theorem \ref{recur} but it is simpler to provide a direct proof. In fact we clearly have $\mathscr S_{n,n-1}=\{[1\,2\cdots n], [2\,1\,3\cdots n],\ldots,[2\,3\cdots n-1 \,1 \,n],[2\,3\cdots n\, 1]\}$ and the result follows immediately from the definition.
\end{proof}
In the following result we have a completely explicit description of the polynomials $s_{n,k}(q,z)$.
\begin{thm}\label{main}If $k<n$ is odd we have
$$s_{n,k}(q,z)=[k+1]_{-q} [k+2]_{q}\cdots [n-1]_{\eps^{n}q}\Big(\sum_{i=0}^{n-k-1}\eps^{(n+1)(n-i-1)}z^i q^{n-i-1}+z^{n-k}[k]_{\epsilon^{n-1}q}\Big).
$$
If $k<n-1$ is even we have
\begin{align*}s_{n,k}(q,z)=&\,\,[k+2]_{-q}\cdots [n-1]_{\eps^{n}q}\cdot
\Big([k+1]_{\eps^n q}[n]_{\eps^{n-1}q}+(z-1)\\&\cdot\Big(\sum_{i=0}^{n-k-1}[k+1]_{\eps^n q}[n-i-1]_{\eps^{n+1}q}z^i+\sum_{\substack{i=0\\ i\textup{ even}}}^{n-k-1}q^{n-i-1}z^i\big([k]_{-q}- [k]_{q}\big)\Big)\Big),
\end{align*}
where the last sum runs through all nonnegative even integers smaller than $n-k$.
\end{thm}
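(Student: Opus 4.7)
The plan is to prove both formulas together by induction on $n$ with $k$ fixed, using the recurrence of Theorem \ref{recur} for the inductive step and Lemma \ref{n,n-1} for the base case $n=k+1$. The two formulas must be handled simultaneously because the recurrence contains the summand $z^{n-k}s_{n-1,k-1}(q,1)$, in which $k$ is replaced by $k-1$, flipping the parity. Before starting the induction I would derive in closed form the specializations $s_{n-1,k}(q,1)$ and $s_{n-1,k-1}(q,1)$ from the two formulas of Theorem \ref{main}; this is a self-contained collapse that should yield the product $[k+1]_{\eps^{k+n+nk}q}[k+2]_{\eps^{k+1}q}\cdots[n]_{\eps^{n-1}q}$ announced in the text after Theorem \ref{recur}, and having these in hand reduces the remaining work to a single application of the full-$z$ inductive hypothesis.

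For the base case $n=k+1$ only the odd-$k$ formula applies, since the even-$k$ formula is stated only for $k<n-1$. Here $n$ is even, the prefactor product $[k+1]_{-q}\cdots[n-1]_{\eps^n q}$ is empty, the sum reduces to its single term $\eps^{(n+1)(n-1)}q^{n-1}=(-q)^{n-1}$, and the remaining piece $z^{n-k}[k]_{\eps^{n-1}q}$ becomes $z[n-1]_{-q}$, in agreement with Lemma \ref{n,n-1}.

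For the inductive step I would substitute the three inputs $s_{n-1,k}(q,1)$, $s_{n-1,k}(q,-z)$ and $s_{n-1,k-1}(q,1)$ from the inductive hypothesis into the recurrence of Theorem \ref{recur}: the first and third are the closed-form specializations computed above, while the second is obtained by substituting $z\mapsto -z$ into the inductive formula of the same parity as $k$. After multiplying through by $1+z$ to clear the denominator and factoring out the common long product of $q$-analogues inherited from the $n-1$ case, what remains is a polynomial identity in $z$ of degree $n-k$, which I would verify by comparing coefficients of $z^i$ for each $i\in\{0,1,\ldots,n-k\}$, splitting further by the parity of $i$ in the even-$k$ case because of the inner sum over even indices in the second formula.

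The main obstacle is the bookkeeping of the sign factors $\eps^{?}$, whose exponents depend on the parities of $n$, $k$, and the summation index $i$: the substitution $z\mapsto -z$ introduces $(-1)^i$ factors, the recurrence already carries the prefactors $\eps^k$ and $\eps^n$, and the target formula carries exponents $n+1$, $n-1$, and $k+n+nk$. The substantive identity is the one that produces the new factor $[n-1]_{\eps^n q}$ (together with a compatible shift of the inner sum) out of the combination of $(-q)^{n-1}$, $(1-q^{n-1})$, and the $[n-2]_{\eps^{n-1}q}$-type factors released by the inductive hypothesis after the $z\mapsto -z$ substitution. Arranging this so that both parities of $k$ fall out from a single coefficient-by-coefficient comparison is what makes the argument intricate and explains the author's warning that the proof is technical and many details are omitted.
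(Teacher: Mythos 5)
Your plan coincides with the paper's own proof: the author likewise notes that the recurrence of Theorem \ref{recur}, together with the initial data (small $n$, the $k=n-1$ case of Lemma \ref{n,n-1}, and the identification $s_{n,0}=s_{n,1}$), uniquely determines the polynomials, and then verifies that the claimed closed forms satisfy the recurrence by clearing the $1+z$ denominator, factoring out the common product of $q$-analogues, and matching the resulting polynomial identity in $z$ with the same parity case-splits. The only organizational differences are that the paper compares the partial coefficient sums $b_i=\sum_{j>i}a_j$ (to match the $(z-1)\sum_i b_i z^i$ shape of the target) rather than raw coefficients of $z^i$, and that it explicitly isolates the boundary case $k=n-2$ even, where the input $s_{n-1,k}$ must be taken from Lemma \ref{n,n-1} instead of the general even-$k$ formula.
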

\begin{proof}
The result is readily verified for $n=1,2,3$. We may also easily verify that the claimed expression for $k=1$ agrees with the one for $k=0$. These initial conditions together with the recurrence given by Theorem \ref{recur} uniquely determine the polynomials $s_{n,k}(q,z)$, and hence we only have to verify that the claimed expressions actually satisfy this recurrence.

We provide some sketches of the proof only if $k\neq n-2$ is even. If $k$ is odd or $k=n-2$ the proof is similar (and simpler) and is left to the reader.

So let $2\leq k\leq n-3$, with $k$  even. If we  substitute the claimed expressions in the right-hand side of the recursion in Theorem \ref{recur}, and we delete the factor let $[k+2]_{-q}\cdots[n-1]_{\eps^n q}$ for notational convenience, we obtain the polynomial
\begin{align*}
g_{n,k}(q,z)
&\eqdef \frac{1}{1+z}(z^{n-k} -\eps^n q^{n-1}+\eps^n z-\eps^n zq^{n-1})[k+1]_{\eps^{n-1}q}-\eps^n z(1-\eps^n q)\\&\hspace{0cm}\cdot\Big(\sum_{i=0}^{n-k-2}[k+1]_{\eps^{n-1}q}[n-i-2]_{\eps^n q}(-z)^i+\sum_{\substack{i=0\\ i\textrm{ even}}}^{n-k-2}q^{n-i-2}z^i([k]_{-q}- [k]_{q})\Big)+z^{n-k} [k]_{-q}[k+1]_{q},
\end{align*}
where we have also used that $1-q^{n-1}=(1-\eps^n q)[n-1]_{\eps^n q}$. So the proof will be achieved if we show that
\begin{equation}\label{gnk}
   g_{n,k}(q,z)=[k+1]_{\eps^n q}[n]_{\eps^{n-1}q}+(z-1)\cdot\Big(\sum_{i=0}^{n-k-1}[k+1]_{\eps^n q}[n-i-1]_{\eps^{n+1}q}z^i+\sum_{\substack{i=0\\ i\textrm{ even}}}^{n-k-1}q^{n-i-1}z^i\big([k]_{-q}- [k]_{q}\big)\Big).
\end{equation}
By means of the identity $(1-\eps^n q)([k]_{-q}-[k]_{q})=-2q[k]_{\eps^{n-1}q}$ we can deduce that
\begin{align*}
g_{n,k}(q,z)&=(\eps^n z+\eps^{n-1}z^2+\cdots+z^{n-k-1}-\eps^n q^{n-1})[k+1]_{\eps^{n-1}q}\\&-\eps^n z\Big(\sum_{i=0}^{n-k-2}[k+1]_{\eps^{n-1}q}(1-(\eps^n q)^{n-i-2})(-z)^i-2\sum_{\substack{i=0\\ i \textrm{ even}}}^{n-k-2}q^{n-i-1}z^i[k]_{\eps^{n-1}q})\Big)+z^{n-k} [k]_{-q}[k+1]_{q}.
\end{align*}

Now we make the following observation. If  we expand $g_{n,k}(q,z)=\sum_{i\geq 0} a_i(q)z^i$  then we also have $g_{n,k}(q,z)=b_{-1}(q)+(z-1)\sum_{i\geq 0} b_i (q)z^i$ where
 \begin{equation}\label{aibi}b_i(q)=\sum_{j>i}a_j(q) \,\,\textrm{ for all }i\geq -1.
 \end{equation}
To complete the proof we only have to compute the polynomials $b_i(q)$ using Equation \eqref{aibi} and verify that they agree with the expressions given in \eqref{gnk}. Unfortunately,  we need to split the proof again, according to the parity of $i$ and $n$. We treat the case where $i$ and $n$ are both even, leaving the other similar cases to the reader.

We have
\begin{align*}
b_i(q)&=[k+1]_{-q}\Big(1-\sum_{j=i}^{n-k-2}(1-q^{n-j-2})(-1)^j\Big)+2[k]_{-q}\sum_{\substack{j=i\\ j \textrm{ even}}}^{n-k-2}q^{n-j-1}+[k]_{-q}[k+1]_q\\
&=[k+1]_{-q}(q^k-q^{k+1}+\cdots+q^{n-i-2})+2[k]_{-q}(q^{k+1}+q^{k+3}+\cdots+q^{n-i-1})+[k]_{-q}[k+1]_q\\
&=[k]_{-q}[n-i]_q+q^k[n-i-1]_{-q}+q^{n-i-1}[k]_{-q}.
\end{align*}
Now we make the simple observation that if $r$ and $s$ are both even then
$
[r]_q[s]_{-q}=[r]_{-q}[s]_q
$
and so
\begin{align*}
b_i(q)&=[k]_{q}[n-i]_{-q}+q^k[n-i-1]_{-q}+q^{n-i-1}[k]_{-q}\\
&=[k]_{q}[n-i-1]_{-q}-q^{n-i-1}[k]_q+q^k[n-i-1]_{-q}+q^{n-i-1}[k]_{-q}\\
&=[k+1]_q[n-i-1]_{-q}+q^{n-i-1}([k]_{-q}-[k]_q),
\end{align*}
and the proof is complete.
\end{proof}

\begin{cor}\label{cormain}
We have
$$
s_{n,k}(q)=\sum_{\si\in \mathscr S_{n,k}}\eps^{\inv(\si)}q^{\maj(\si)}=[k+1]_{\eps^{k+n+nk}q} [k+2]_{\eps^{k+1}q} [k+3]_{\eps^{k+2}q}\cdots [n]_{\eps^{n-1}q},
$$
where $\eps=- 1.$
\end{cor}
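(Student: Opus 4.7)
The plan is to deduce the corollary directly from the explicit $(q,z)$-formulas in Theorem~\ref{main} by setting $z=1$, together with the boundary Lemma~\ref{n,n-1} for the case $k=n-1$. No new recursion is needed; all the work is in matching the signs.

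First, I would handle the case in which $k<n-1$ is even. The explicit formula from Theorem~\ref{main} is
\[
s_{n,k}(q,z)=[k+2]_{-q}\cdots[n-1]_{\eps^n q}\cdot\Bigl([k+1]_{\eps^n q}[n]_{\eps^{n-1}q}+(z-1)\cdot (\star)\Bigr),
\]
so evaluating at $z=1$ kills the entire $(\star)$-term and yields
\[
s_{n,k}(q,1)=[k+1]_{\eps^n q}[k+2]_{-q}[k+3]_q\cdots[n-1]_{\eps^n q}[n]_{\eps^{n-1}q}.
\]
I then check sign by sign that this is exactly the product in the corollary: since $k$ is even, $\eps^{k+n+nk}=\eps^n$, so $[k+1]_{\eps^{k+n+nk}q}=[k+1]_{\eps^n q}$, and for $j\ge k+2$ the exponent $\eps^{j-1}$ alternates starting from $\eps^{k+1}=-1$, matching $[k+2]_{-q}[k+3]_q\cdots$.

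Next, I would do the case in which $k<n$ is odd. Here Theorem~\ref{main} gives
\[
s_{n,k}(q,z)=[k+1]_{-q}[k+2]_{q}\cdots[n-1]_{\eps^{n}q}\Bigl(\sum_{i=0}^{n-k-1}\eps^{(n+1)(n-i-1)}z^i q^{n-i-1}+z^{n-k}[k]_{\eps^{n-1}q}\Bigr).
\]
The prefix $[k+1]_{-q}[k+2]_q\cdots[n-1]_{\eps^n q}$ agrees with $[k+1]_{\eps^{k+n+nk}q}[k+2]_{\eps^{k+1}q}\cdots[n-1]_{\eps^{n-2}q}$ because for $k$ odd one has $k+n+nk\equiv k+n(k+1)\equiv 1\pmod 2$ and $\eps^{k+1}=1$. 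So it only remains to verify the identity
\[
\sum_{i=0}^{n-k-1}\eps^{(n+1)(n-i-1)} q^{n-i-1}+[k]_{\eps^{n-1}q}=[n]_{\eps^{n-1}q}.
\]
I would split on the parity of $n$: if $n$ is odd, both $\eps^{n-1}$ and $\eps^{n+1}$ are $+1$, and the sum collapses to $q^{n-1}+q^{n-2}+\cdots+q^k+[k]_q=[n]_q$; if $n$ is even, writing $\eps^{(n+1)(n-i-1)}=\eps^{n-i-1}$ turns the summand into $(-q)^{n-i-1}$, and reindexing by $j=n-i-1$ gives $\sum_{j=k}^{n-1}(-q)^j+[k]_{-q}=[n]_{-q}$, as required.

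Finally, the boundary case $k=n-1$ is outside the even-$k$ branch of Theorem~\ref{main}; here I invoke Lemma~\ref{n,n-1} which gives $s_{n,n-1}(q,1)=[n]_{-q}$, and a short parity check shows $\eps^{k+n+nk}=\eps^{n^2+n-1}=-1$, matching the corollary. The main obstacle throughout is the sign bookkeeping: identifying $\eps^{(n+1)(n-i-1)}$ with the relevant power of $\eps^{n-1}$ so that the telescoping into $[n]_{\eps^{n-1}q}$ is visible. Once the parity split is made explicit, the computation is routine.
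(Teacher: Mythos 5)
Your proposal is correct and follows essentially the same route as the paper, which simply deduces the corollary by specializing Theorem~\ref{main} at $z=1$; your sign verifications ($\eps^{k+n+nk}\equiv\eps^n$ for $k$ even, the telescoping into $[n]_{\eps^{n-1}q}$ for $k$ odd) all check out. Your explicit appeal to Lemma~\ref{n,n-1} for the case $k=n-1$ even is a sensible touch, since that case falls outside the stated hypotheses of the even branch of Theorem~\ref{main}, though the paper leaves this implicit.
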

\begin{proof}
  This follows immediately from Theorem \ref{main}.
\end{proof}
We close this section by observing that Corollary \ref{cormain} can also be interpreted as an alternating version of (a special case of) Equation \eqref{bw2}. In fact, consider the poset $T_{n,k}=\{x_1,x_2,\ldots,x_n\}$ with the ordering given by $x_i<x_j$ if and only if $n-k<i<j$. The Hasse diagram of $T_{n.k}$ is a forest consisting of $n-k$ disjoint vertices and of a linear tree of length $k$ (see Figure \ref{TeR} (left)). If we consider the natural labelling for $T_{n,k}$ given by $w(x_i)=i$, then we clearly have $\mathscr L(w)=\mathscr S_{n,k}$ and so Corollary \ref{cormain} can also be reformulated as
\[
   \sum_{\sigma\in \mathscr L(w)}\eps^{\inv(\sigma)}q^{\maj(\si)}=[k+1]_{\eps^{k+n+nk}q} [k+2]_{\eps^{k+1}q} [k+3]_{\eps^{k+2}q}\cdots [n]_{\eps^{n-1}q}.
\]
\section{Signed mahonian distributions on rakes' labellings}\label{rakes}

For $k<n$ we denote by $R_{n,k}$ the poset consisting of $n$ elements $x_1,\ldots,x_n$ with the ordering given by $x_i<x_j$ if and only if $i,k<j$. The Hasse diagram of $R_{n,k}$ is shown in Figure \ref{TeR} (right) and because of its shape we call $R_{n,k}$ a \emph{rake} with $k$ \emph{teeth}. It is clear that the action of $\Aut(R_{n,k})\cong S_k$ on ${\mathscr W}(R_{n,k})$ by permutations of the labels of the teeth preserves inversion index and major index; therefore we can consider these indices also on the set $ \mathscr R_{n,k}$ of equivalence classes of labellings. 
\setlength{\unitlength}{2pt}
\begin{center} 
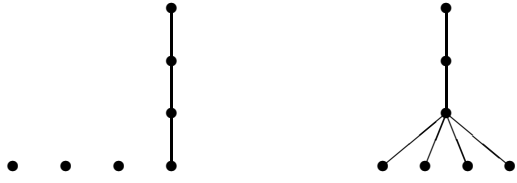
\begin{figure}
\begin{picture}(104,30) 
\put(40,30){\circle*{2}} 
\put(10,0){\circle*{2}}
\put(20,0){\circle*{2}}
\put(30,0){\circle*{2}}
\put(40,0){\circle*{2}}
\put(80,0){\circle*{2}}
\put(88,0){\circle*{2}}
\put(96,0){\circle*{2}}
\put(104,0){\circle*{2}}
\put(40,10){\circle*{2}}
\put(40,20){\circle*{2}}
\put(92,10){\circle*{2}}
\put(92,20){\circle*{2}}
\put(92,30){\circle*{2}}

\put(40,0){\line(0,1){30}}
\put(92,10){\line(0,1){20}}
\put(80,0){\line(6,5){12}}
\put(88,0){\line(2,5){4}}
\put(92,10){\line(2,-5){4}}
\put(92,10){\line(6,-5){12}}

\end {picture}
\caption{$T_{7,4}$ (left) and $R_{7,4}$ (right)}\label{TeR}
\end {figure}
\end{center} 

It follows in particular from \eqref{bw2} that 
\begin{equation}\label{rakeunsigned}
\sum_{w\in {\mathscr R}_{n,k}}q^{\maj(w)}=[k+1]_q\cdots[n]_q,
\end{equation}
and we denote by $r_{n,k}(q)$ its signed version
\[
   r_{n,k}(q)=\sum_{w\in {\mathscr R}_{n,k}}\eps^{\inv(w)}q^{\maj(w)}.
\]
As for the polynomials $s_{n,k}$ we need to introduce a further catalytic parameter.
If $w\in \mathscr R_{n,k}$ we let $r(w)$ be $n-w(x_n)$, where $x_n$ is the top element of $R_{n,k}$.
We let
\[
r_{n,k}(q,t)\eqdef \sum_{w\in \mathscr R_{n,k}}\eps^{\inv(w)}q^{\maj(w)}t^{r(w)}.
\]
These polynomials satisfy the following recursion.
\begin{lem}\label{recr}
For all $n\geq k+2$ we have
\[
r_{n,k}(q,t)=\frac{1}{1+t}\Big(t(1-q^{n-1})r_{n-1,k}(q,-t)+(1+t(- qt)^{n-1})r_{n-1,k}(q,1)\Big)
\]
\end{lem}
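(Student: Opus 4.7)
The approach mirrors the decomposition used in the proof of Theorem~\ref{recur}. Represent each equivalence class $w\in\mathscr R_{n,k}$ by the canonical lift with $w(x_1)<\cdots<w(x_k)$ on the teeth. Since $x_n$ is the unique maximum of $R_{n,k}$ and is comparable to every other vertex, the map $w\mapsto(w_0,m)$, where $m:=w(x_n)\in\{1,\ldots,n\}$ and $w_0$ is obtained by deleting $x_n$ and rescaling the remaining labels down to $\{1,\ldots,n-1\}$, is a bijection
\[
\mathscr R_{n,k}\;\longleftrightarrow\;\mathscr R_{n-1,k}\times\{1,\ldots,n\};
\]
note that the hypothesis $n\geq k+2$ ensures $R_{n-1,k}$ still has a chain and a well-defined top vertex $x_{n-1}$.

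Next I would track the three statistics under this bijection. Writing $j:=n-m=r(w)$, the identity $\inv(w)=\inv(w_0)+j$ is immediate since the only new inversions are the pairs $(x_i,x_n)$ with $w(x_i)>m$, of which there are exactly $n-m=j$. For the major index, the descents at the teeth and at the interior chain vertices $x_{k+1},\ldots,x_{n-2}$ are determined by inequalities unaffected by the rescaling, while $x_n$ is never a descent in $w$ and $x_{n-1}$ is never a descent in $w_0$. Hence the only possible discrepancy occurs at $x_{n-1}$, whose hook length is $n-1$, producing $\maj(w)=\maj(w_0)+(n-1)$ exactly when $w(x_{n-1})>m$. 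Using $w_0(x_{n-1})=n-1-r(w_0)$ together with the shift rule for the rescaling, this condition turns into $j>r(w_0)$.

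Substituting these relations into the definition of $r_{n,k}(q,t)$ and factoring out $\eps^{\inv(w_0)}q^{\maj(w_0)}$, the inner sum over $j$ splits at $j=r(w_0)$ into two geometric series in $\eps t=-t$ with common denominator $1-\eps t=1+t$, namely
\[
\sum_{j=0}^{r(w_0)}(\eps t)^j+q^{n-1}\sum_{j=r(w_0)+1}^{n-1}(\eps t)^j=\frac{1-q^{n-1}(\eps t)^n-(1-q^{n-1})(\eps t)^{r(w_0)+1}}{1+t}.
\]
Summing over $w_0$ and recognizing $\sum_{w_0}\eps^{\inv(w_0)}q^{\maj(w_0)}=r_{n-1,k}(q,1)$ together with $\sum_{w_0}\eps^{\inv(w_0)}q^{\maj(w_0)}(\eps t)^{r(w_0)}=r_{n-1,k}(q,-t)$, the two resulting contributions assemble into $(1+t(-qt)^{n-1})r_{n-1,k}(q,1)$ and $t(1-q^{n-1})r_{n-1,k}(q,-t)$ respectively, yielding the asserted recurrence.

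The main technical point is the indicator rewriting $w(x_{n-1})>m\iff j>r(w_0)$; this is what forces the auxiliary variable $t$ to be replaced by $-t$ inside $r_{n-1,k}$ on the right-hand side, while the careful bookkeeping of which potential descents survive the removal of $x_n$ is the only other place where care is needed. Everything else reduces to the routine geometric-sum manipulation analogous to the one at the end of the proof of Theorem~\ref{recur}.
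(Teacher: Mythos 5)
Your proposal is correct and follows essentially the same route as the paper: the same bijection $w\mapsto(w_0,w(x_n))$ obtained by deleting the top vertex and rescaling, the same tracking of $\inv$, $\maj$ (with the single affected descent at $x_{n-1}$ of hook length $n-1$) and $r$, and the same splitting of the inner sum into two geometric series with denominator $1+t$. The reindexing by $j=n-m$ and the explicit rewriting of the descent condition as $j>r(w_0)$ are just notational variants of what the paper does.
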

\begin{proof}
This proof is similar (and actually much simpler) to that of Theorem \ref{recur} and so we briefly sketch it.
There is a bijection $w\rightarrow (w_0,w(x_n))$ between $\mathscr R_{n,k}$ and $\mathscr R_{n-1,k}\times [n]$ satisfying $\inv(w)=\inv(w_0)+n-w(x_n)$, $r(w)=n-w(x_n)$, $\maj(w)=\maj(w_0)$ if $w_0(n-1)< w(n))(n-1)$, and $\maj(w)=\maj(w_0)+n-1$ otherwise. Therefore
\begin{align*}
r_{n,k}(q,t)&=\sum_{w_0\in \mathscr R_{n-1,k}}\eps^{\inv(w_0)}q^{\maj(w_0)}\Big(q^{n-1}\sum_{i=1}^{w_0(n-1)}(- t)^{n-i}+\sum_{i=w_0(n-1)+1}^n (- t)^{n-i}\Big)\\
&=\sum_{w_0\in \mathscr R_{n-1,k}}\eps^{\inv(w_0)}q^{\maj(w_0)}\Big(q^{n-1}\frac{(- t)^{n-w_0(n-1)}-(- t)^n}{1+ t}+\frac{1-(- t)^{n-w_0(n-1)}}{1+ t}\Big)\\
&=\frac{1}{1+t}\Big( t(1-q^{n-1})r_{n-1,k}(q,-t)+(1+t(-qt)^{n-1})r_{n-1,k}(q,1)\Big).\qedhere
\end{align*}

\end{proof}
Despite the polynomials $s_{n,k}$, the polynomials $r_{n,k}(q,t)$ admit a nice factorization only if $k$ is odd.
\begin{prop}\label{rnkodd}
Let $k$ be odd and $n>k$. Then
\[
r_{n,k}(q,t)=[k+1]_{-q}[k+2]_{q}\cdots [n-1]_{\eps^n q}[n]_{\eps^{n+1}qt}.
\]
\end{prop}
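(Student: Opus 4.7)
The plan is to prove the formula by induction on $n\ge k+1$, with the base case $n=k+1$ computed directly from the definition and the inductive step handled by Lemma \ref{recr}.

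First I would verify the base case. The poset $R_{k+1,k}$ consists of $k$ incomparable teeth $x_1,\ldots,x_k$ all covered by the top vertex $x_{k+1}$, and $\Aut(R_{k+1,k})=S_k$ permutes the teeth freely, so equivalence classes in $\mathscr R_{k+1,k}$ are parametrised by the value $j=w(x_{k+1})\in[k+1]$. For such a class, each of the $k+1-j$ teeth bearing a label larger than $j$ contributes exactly one inversion and one descent of hook length $1$, giving $\inv(w)=\maj(w)=r(w)=k+1-j$. Summing over $j$ yields $r_{k+1,k}(q,t)=\sum_{j=1}^{k+1}(-qt)^{k+1-j}=[k+1]_{-qt}$, which equals $[k+1]_{\eps^{k+2}qt}$ because $k$ is odd (and this matches the claim, since the product $[k+1]_{-q}\cdots[n-1]_{\eps^nq}$ is empty when $n=k+1$).

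For the inductive step I would set $A\eqdef[k+1]_{-q}[k+2]_{q}\cdots[n-2]_{\eps^{n-1}q}$. The inductive hypothesis, together with the identity $[n-1]_{-\eps^{n}qt}=[n-1]_{\eps^{n+1}qt}$ coming from $\eps^{n+1}=-\eps^n$, gives
\[
r_{n-1,k}(q,1)=A\,[n-1]_{\eps^{n}q},\qquad r_{n-1,k}(q,t)=A\,[n-1]_{\eps^{n}qt},\qquad r_{n-1,k}(q,-t)=A\,[n-1]_{\eps^{n+1}qt}.
\]
Substituting into Lemma \ref{recr} and cancelling $A$ reduces the claim to the single identity
\[
t(1-q^{n-1})[n-1]_{\eps^{n+1}qt}+\bigl(1+t(-qt)^{n-1}\bigr)[n-1]_{\eps^{n}q}=(1+t)\,[n-1]_{\eps^{n}q}\,[n]_{\eps^{n+1}qt}.
\]

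To finish, I would introduce $x\eqdef\eps^nq$ and $y\eqdef\eps^{n+1}qt$. Since $n(n-1)$ is always even, $q^{n-1}=x^{n-1}$ and $(-qt)^{n-1}=y^{n-1}$, so $1-q^{n-1}=(1-x)[n-1]_x$; meanwhile $\eps^{n+1}=-\eps^n$ gives $tx=-y$ and hence $t(1-x)=t+y$. Dividing the displayed identity by $[n-1]_x$ collapses it to $(t+y)[n-1]_y+1+ty^{n-1}=(1+t)[n]_y$, which follows immediately from $y[n-1]_y=[n]_y-1$ and $t[n-1]_y+ty^{n-1}=t[n]_y$. The main obstacle is conceptual rather than computational: one has to spot the change of variables $(x,y)=(\eps^nq,\eps^{n+1}qt)$ that decouples the $q$- and $(qt)$-slots, and to check that the assumption ``$k$ odd'' is exactly what is required for the tail $A$ to be the same across all three evaluations of $r_{n-1,k}$ so that it can be cleanly factored out.
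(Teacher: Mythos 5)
Your proof is correct and follows essentially the same route as the paper: induction on $n$ with base case $n=k+1$, then substitution of the claimed formula into Lemma \ref{recr} and reduction to the same one-variable $q$-identity (your change of variables $x=\eps^nq$, $y=\eps^{n+1}qt$ is just a tidier packaging of the paper's direct manipulation). One small quibble: the hypothesis that $k$ is odd is used only in the base case, to ensure $\eps^{k+2}=-1$ so that $[k+1]_{\eps^{k+2}qt}=[k+1]_{-qt}$; it is not what makes the tail $A$ common to the three evaluations of $r_{n-1,k}$, since only the last bracket of the claimed formula involves $t$ regardless of the parity of $k$.
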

\begin{proof}
The case  $n=k+1$ is an easy verification. Since this initial condition plus the recursion of Lemma \ref{recr} uniquely determine $r_{n,k}(q,t)$ we only need to show that the claimed expression satisfies the recurrence. Substituting $r_{n,k}(q,t)$ as given by the Proposition and cancelling a common factor, it remains to check that
\[
   [n]_{\eps^{n+1}qt}=\frac{1}{t+1}\Big(t(1+\eps^{n-1}q)[n-1]_{\eps^{n-1}qt}+1+t(-qt)^{n-1}\Big),
\]
where we have also used that $(1-q^{n-1})=(1+\eps^{n-1}q)[n-1]_{\eps^{n}q}$. But the above right hand side may be rewritten as
\begin{align*}
\frac{1}{1+t}& \Big(t([n-1]_{\eps^{n-1}qt}+(-qt)^{n-1})+\eps^{n-1}qt[n-1]_{\eps^{n-1}qt}+1\Big)\\
&=\frac{1}{1+t}\Big(t[n]_{\eps^{n-1}qt}+[n]_{\eps^{n-1}qt}\Big)\\
&=[n]_{\eps^{n-1}qt}. \qedhere
\end{align*}
\end{proof}

If $k$ is even the polynomial $r_{n,k}(q,t)$ does not admit a nice factorization in general. Nevertheless, if $n$ is also even the specialization $r_{n,k}(q)$ can be easily computed thanks to the following result, whose proof is valid for all $k$.
\begin{prop}
   Let $n$ be even. Then
\[
   r_{n,k}(q)=[k+1]_{\eps^{k}q}[k+2]_{\eps^{k+1}q}\cdots [n-1]_q[n]_{-q}.
\]
\end{prop}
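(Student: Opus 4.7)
The plan is to induct on $n$ through even values, specialising Lemma~\ref{recr} at $t=1$. The base case $n=2$ is a direct check: for $k\in\{0,1\}$, $\mathscr R_{2,k}$ has two labellings giving $r_{2,k}(q)=1-q$, which matches $[1]_q[2]_{-q}$ or $[2]_{-q}$.

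For the inductive step, the crucial simplification is that when $n$ is even one has $(-q)^{n-1}=-q^{n-1}$, so the two coefficients of $r_{n-1,k}(q,\pm 1)$ appearing in Lemma~\ref{recr} become equal and the recursion collapses to the symmetric form
\[
r_{n,k}(q)=\frac{1-q^{n-1}}{2}\bigl(r_{n-1,k}(q,1)+r_{n-1,k}(q,-1)\bigr).
\]
The task thus reduces to evaluating $S_{m,k}(q):=r_{m,k}(q,1)+r_{m,k}(q,-1)$ for $m:=n-1$ odd. I would establish the auxiliary factorisation
\[
S_{m,k}(q)=\frac{2(1-q^{m+1})}{1-q^2}\cdot [k+1]_{\eps^k q}[k+2]_{\eps^{k+1}q}\cdots[m-1]_{\eps^{m-2}q};
\]
substituting this and using the identity $\frac{(1-q^{n-1})(1-q^n)}{1-q^2}=[n-1]_q[n]_{-q}$ (valid for $n$ even) then produces the claimed product.

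The auxiliary factorisation I would prove by a secondary induction on odd $m\geq k+1$. The base $m=k+1$ is a direct count giving $r_{k+1,k}(q,t)=[k+1]_{-qt}$, whence $S_{k+1,k}(q)=[k+1]_q+[k+1]_{-q}=2(1-q^{k+2})/(1-q^2)$ matches the formula with an empty product. For the inductive step, if $k$ is odd then Proposition~\ref{rnkodd} gives $r_{m,k}(q,t)$ as an explicit product whose only $t$-dependence is in the final factor $[m]_{\eps^{m+1}qt}$, and $S_{m,k}(q)$ follows from $[m]_q+[m]_{-q}=2(1-q^{m+1})/(1-q^2)$ for $m$ odd. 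If $k$ is even, where $r_{m,k}(q,t)$ admits no such factorisation, one iterates Lemma~\ref{recr}: $r_{m,k}(q,1)$ arises from the $t=1$ evaluation (which invokes $r_{m-1,k}(q,\pm 1)$, and $m-1=n-2$ is even, so the main induction gives $r_{m-1,k}(q,1)$), while $r_{m,k}(q,-1)$ is obtained by L'H\^opital's rule on $(1+t)\,r_{m,k}(q,t)$ at $t=-1$; the derivative term $\partial_t r_{m-1,k}(q,t)|_{t=1}$ that appears must then combine with the $t=1$ piece so that the symmetric sum $S_{m,k}(q)$ closes into the desired product.

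The main obstacle is precisely this last step in the $k$-even case: verifying that the derivative contribution arising from L'H\^opital's rule telescopes or cancels in $S_{m,k}(q)$, leaving the claimed closed form. This is the only non-routine point and is where the proof's uniformity across parities of $k$ is most delicate; everything else is formal manipulation of the recursion and $q$-analogue identities.
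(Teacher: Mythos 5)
Your reduction for $n$ even is correct as far as it goes: at $t=1$ Lemma \ref{recr} does collapse to $r_{n,k}(q)=\tfrac{1-q^{n-1}}{2}\bigl(r_{n-1,k}(q,1)+r_{n-1,k}(q,-1)\bigr)$, your auxiliary factorisation of $S_{m,k}(q)$ is exactly equivalent to the Proposition, and for $k$ odd everything follows from Proposition \ref{rnkodd}. But for $k$ even there is a genuine gap, and it sits at the only hard point of the argument. First, the induction as you have set it up does not close: to compute $r_{m,k}(q,1)$ for $m=n-1$ odd via Lemma \ref{recr} you need $r_{m-1,k}(q,-1)$, and to compute $r_{m,k}(q,-1)$ by L'H\^opital you need $\partial_t r_{m-1,k}(q,t)\vert_{t=1}$; neither quantity is supplied by your main induction hypothesis, which controls only $r_{m-1,k}(q,1)=r_{n-2,k}(q)$. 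Worse, iterating the recursion to reach these auxiliary values drags in derivative evaluations at both $t=1$ and $t=-1$ at every level (the $t=-1$ evaluations requiring L'H\^opital each time), and since the paper observes that $r_{m,k}(q,t)$ admits no nice factorisation for $k$ even, there is no reason to expect any of these quantities to be tractable. Second, the asserted cancellation of the derivative contribution in $S_{m,k}(q)$ is precisely the content of the theorem in the even case, and you supply no mechanism for it --- you name it yourself as ``the only non-routine point'' and leave it unproved. As written, the proposal establishes the statement only for $k$ odd.

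For comparison, the paper abandons the recursion entirely for this Proposition and instead uses a sign-reversing involution in the spirit of Wachs: exchange the labels $2i-1$ and $2i$ at the smallest $i$ for which they sit on comparable but non-adjacent vertices. This involution reverses the sign $\eps^{\inv}$ and preserves $\maj$, so only its fixed points contribute; these are encoded by a labelling of the half-size rake $\mathscr R_{n/2,\lfloor k/2\rfloor}$ together with a $0$--$1$ vector recording the orientation of each adjacent pair, and the unsigned formula \eqref{rakeunsigned} then evaluates the sum in closed form. That route works uniformly in $k$ precisely because it never needs the catalytic variable $t$. To salvage your approach you would have to either obtain a full closed form for $r_{m,k}(q,t)$ with $k$ even (which the paper says does not exist in a nice factorized shape) or replace the recursive step by an involution argument of this kind.
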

\begin{proof}
   We generalize here an idea appearing in \cite{Wa}. Consider the following bijection $\phi$ of $\mathscr R_{n,k}$. If $w\in \mathscr R_{n,k}$ is such that the vertices labelled by $2i-1$ and $2i$ are either adjacent or are not comparable for all $i\in [\frac{n}{2}]$ we let $\phi(w)=w$. Otherwise let $i$ be the minimum index such that $2i-1$ and $2i$ are comparable but not adjacent and we let $\phi(w)$ be the labelling obtained by exchanging the labels $2i$ and $2i-1$. It is clear that $\phi$ is an involution on $\mathscr R_{n,k}$ and that if $\phi(w)\neq w $ then $\eps^{\inv(w)}=-\eps^{\inv(\phi(w))}$ and $\maj(w)=\maj(\phi(w))$. Therefore, computing $r_{n,k}(q)$ we can restrict the sum on the fixed points of $\phi$.
If $w$ is fixed by $\phi$ then among the $\frac{n}{2}$ pairs of labels of the form $\{2i-1,2i\}$ there are $\lfloor \frac{n+1-k}{2}\rfloor$ pairs in adjacent positions, and $\lfloor\frac{k}{2}\rfloor$ pairs which appear in the teeth of the rake. Therefore we can construct an element $\bar w\in \mathscr R_{\frac{n}{2},\lfloor\frac{k}{2}\rfloor}$ in the following way: if $2i-1$ and $2i$ appear in the teeth of $w$ then $i$ is a label of a tooth of  $\bar w$. The other labels are inserted in such way that $w^{-1}(2i)<w^{-1}(2j)$ if and only if $\bar w^{-1}(i)<\bar w^{-1}(j)$.
We also define a 0-1 vector $a(w)=(a_1(w),\ldots,a_d(w))$, with $d=\lfloor \frac{n+1-k}{2}\rfloor$, in the following way. Let $\{2i_1-1,2i_1\}, \ldots,\{2i_d-1,2i_d\}$ be the pairs of labels of adjacent vertices ordered in such way that $w^{-1}(2i_1)< w^{-1}(2i_2)<\cdots<w^{-1}(2i_d)$, and we let $a_j(w)=1$ if and only if $w^{-1}(2i_j)<w^{-1}(2i_j-1)$. It is a straightforward verification that the map $w\rightarrow (\bar w,a(w))$ is a bijection between the fixed points of $\phi$ and  $\mathscr R_{\frac{n}{2},\lfloor\frac{k}{2}\rfloor}\times \{0,1\}^d$ such that
\begin{itemize}
\item $\inv(w)\equiv \sum {a_i(w)} \mod 2;$
\item $\maj(w)=2\maj(\bar w)+a_1+\sum_{i=2}^d (k+2i-2)a_i $ if $k$ is odd;
\item $\maj(w)=2 \maj(\bar w)+\sum_{i=1}^d (k+2i-1)a_i$ if $k$ is even;
\end{itemize}
Therefore, if $k$ is odd, applying \eqref{rakeunsigned}, we have
\begin{align*}
r_{n,k}(q)&=\sum_{\bar w \in \mathscr R_{\frac{n}{2},\lfloor\frac{k}{2}\rfloor}}q^{2\maj(\bar w)}(1-q)(1-q^{k+2})(1-q^{k+4})\cdots(1-q^{n-1})\\
&=\Big[\frac{k+1}{2}\Big]_{q^2}\Big[\frac{k+3}{2}\Big]_{q^2}\cdots \Big[\frac{n}{2}\Big]_{q^2}(1-q)(1-q^{k+2})\cdots(1-q^{n-1})\\
&=[k+1]_q[k+2]_{-q}\cdots[n]_{-q}
\end{align*}
Similarly, if $k$ is even, we have
\begin{align*}
r_{n,k}(q)&=\sum_{\bar w \in \mathscr R_{\frac{n}{2},\lfloor\frac{k}{2}\rfloor}}q^{2\maj(\bar w)}(1-q^{k+1})(1-q^{k+3})\cdots(1-q^{n-1})\\
&=\Big[\frac{k+2}{2}\Big]_{q^2}\Big[\frac{k+4}{2}\Big]_{q^2}\cdots \Big[\frac{n}{2}\Big]_{q^2}(1-q^{k+1})(1-q^{k+3})\cdots(1-q^{n-1})\\
&=[k+1]_{-q}[k+2]_{q}\cdots[n]_{-q}.
\end{align*} 
\end{proof}

\section{Mahonian distribution on parabolic quotients in complex reflection groups}
In this section we consider the infinite family of complex reflection groups  $G(r,p,n)$. We let $G=G(r,p,n)$ and  $G^*=G(r,n)/C_p$ and we recall from \cite{Ca1} that
$$
\sum_{g\in G^*}q^{\fmaj(g)}=[d_1]_q[d_2]_q\cdots[d_n]_q,
$$
where $d_i=ri$ if $i<n$ and $d_n=\frac{rn}{p}$ are the fundamental degrees of $G$. It is therefore natural to look at the distribution of the flag-major index on sets of cosets representatives for some special subgroups of $G^*$, in order to generalize Equation \eqref{greta} to these groups. With this in mind we need to extend the ideas appearing in \cite{St}, and in particular the use of $P$-partitions, in this context, using some of the tools developed in \cite{Ca1} and further exploited in \cite{BC}.
In particular, we recall the following result (see \cite[Theorem 8.3]{Ca1} and \cite[Lemma 5.1]{BC}).
\begin{lem}\label{bij}
   The map
\begin{align*}
 G^*\times \mathcal P_n\times \{0,1,\ldots,p-1\} & \longrightarrow  \mathbb N^n\\
(g,\lambda,h)&\mapsto f=(f_1,\ldots,f_n),
\end{align*}
where
$f_i=\lambda_{|g^{-1}(i)|}(g)+r\lambda_{|g^{-1}(i)|}+h\frac{r}{p}$ for all $i\in[n]$, is a bijection.
And in this case we say that $f$ is $g$-compatible.
\end{lem}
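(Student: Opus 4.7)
The strategy is to exhibit the inverse map explicitly. Given $f=(f_1,\ldots,f_n)\in\mathbb N^n$, the underlying permutation $\sigma=|g|$ is read off by sorting: let $\sigma\in S_n$ be the unique permutation satisfying $f_{\sigma(1)}\geq f_{\sigma(2)}\geq\cdots\geq f_{\sigma(n)}$ with ties broken by requiring $\sigma(k)<\sigma(k+1)$ whenever $f_{\sigma(k)}=f_{\sigma(k+1)}$. This tie-breaking rule is forced by the image-side identity $v_k:=f_{\sigma(k)}=\lambda_k(g)+r\lambda_k+hr/p$: the equality $v_k=v_{k+1}$, combined with the partition conditions $\lambda_k\geq\lambda_{k+1}$ and $\lambda_k(g)\geq\lambda_{k+1}(g)$, forces $\lambda_k=\lambda_{k+1}$ and $\lambda_k(g)=\lambda_{k+1}(g)$; since $k_k(g),k_{k+1}(g)\in\{0,\ldots,r-1\}$ and $h_k(g)-h_{k+1}(g)\in\{0,1\}$, this in turn forces $k_k(g)=k_{k+1}(g)$ and $h_k(g)=h_{k+1}(g)$, whence $k\notin\HDes(g)$ and thus $\sigma(k)<\sigma(k+1)$.

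Once $\sigma$ is fixed, the weakly decreasing sequence $v_k:=f_{\sigma(k)}$ must be decomposed as $v_k=r\bigl(h_k(g)+\lambda_k\bigr)+k_k(g)+hr/p$ with $k_k(g)\in\{0,\ldots,r-1\}$ and $h\in\{0,\ldots,p-1\}$. For each candidate $h'\in\{0,\ldots,p-1\}$, the residues modulo $r$ determine a candidate coloring $k_k':=(v_k-h'r/p)\bmod r$ together with a quotient $\mu_k':=(v_k-k_k'-h'r/p)/r$. Among the $p$ candidates, the defining minimality of $(k_1(g),\ldots,k_n(g))$ singles out the correct $h$ as the unique value for which $(k_1',\ldots,k_n')$ is the entrywise smallest partition in its $C_p$-coset; this pins down both $h$ and $k(g)$. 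Then $\HDes(g)=\{k\in[n-1]:\sigma(k)>\sigma(k+1)\text{ and }k_k(g)=k_{k+1}(g)\}$ determines $h_k(g)$, and finally $\lambda_k=\mu_k-h_k(g)$ completes the recovery.

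The main obstacle is showing that exactly one of the $p$ candidate shifts produces a partition that is entrywise smallest in its coset, i.e., that the inverse recipe above is well-defined on all of $\mathbb N^n$. A clean way to sidestep a delicate case analysis, once injectivity is in hand from the explicit inverse description, is to compare generating functions with respect to the grading $|f|=\sum_i f_i$: on the left one has $1/(1-q)^n$; on the right, the identity $|f|=\fmaj(g)+r|\lambda|+nhr/p$ combined with $\sum_{g\in G^*}q^{\fmaj(g)}=\prod_{i=1}^n[d_i]_q$ from \cite{Ca1}, the partition generating function $\sum_{\lambda\in\mathcal P_n}q^{r|\lambda|}=\prod_{i=1}^n(1-q^{ir})^{-1}$, and $\sum_{h=0}^{p-1}q^{nhr/p}=(1-q^{nr})/(1-q^{nr/p})$ yields, after cancelling the common factor $(1-q^r)(1-q^{2r})\cdots(1-q^{(n-1)r})(1-q^{nr/p})(1-q^{nr})$, again $1/(1-q)^n$. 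Matching generating functions level-by-level together with injectivity then forces bijectivity.
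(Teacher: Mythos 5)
First, a remark on scope: the paper does not prove this lemma at all --- it is recalled verbatim from \cite{Ca1} (Theorem 8.3) and \cite{BC} (Lemma 5.1) --- so your attempt has to be judged on its own merits rather than against an in-paper argument. The first half of your proof (recovering $\sigma=|g|$ by sorting $f$ with the stated tie-breaking rule) is essentially sound, but your justification contains a factual slip: it is \emph{not} true that $k_i(g)\in\{0,\ldots,r-1\}$; in the paper's own example $(k_1,\ldots,k_8)=(18,13,13,9,5,5,1,0)$ with $r=6$. What is true, and what your argument actually needs, is $0\leq k_i(g)-k_{i+1}(g)\leq r-1$; with that correction the chain $\lambda_i(g)=\lambda_{i+1}(g)\Rightarrow k_i(g)=k_{i+1}(g)$ and $h_i(g)=h_{i+1}(g)\Rightarrow z_i=z_{i+1}$ and $i\notin\HDes(g)\Rightarrow\sigma(i)<\sigma(i+1)$ does go through.

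The second half is where the genuine gaps lie. (1) Your explicit inverse is incorrect as written: you define $k'_k:=(v_k-h'r/p)\bmod r$, a residue in $\{0,\ldots,r-1\}$, but then treat $(k'_1,\ldots,k'_n)$ as if it were the partition $(k_1(g),\ldots,k_n(g))$, which it is not (again, $k_1(g)=18$ in the example); consequently your $\mu'_k$ equals $h_k(g)+\lambda_k+\lfloor k_k(g)/r\rfloor$ rather than $h_k(g)+\lambda_k$, and the final step $\lambda_k=\mu_k-h_k(g)$ is off by $\lfloor k_k(g)/r\rfloor$. (2) More seriously, your generating-function finish requires injectivity as an input, and you declare injectivity to be ``in hand from the explicit inverse description'' --- but the inverse description is exactly the part that is broken and whose well-definedness you concede you cannot establish, so the argument does not close. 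Injectivity does admit a short direct proof that bypasses all of this: if $(g,\lambda,h)$ and $(g',\lambda',h')$ give the same $f$, then $|g|=|g'|$ by your sorting argument, and reducing $\lambda_k(g)+r\lambda_k+hr/p=\lambda_k(g')+r\lambda'_k+h'r/p$ modulo $r$ shows the colours of $g$ and $g'$ differ by the global constant $(h-h')r/p$, i.e.\ by an element of $C_p$, so $g=g'$ in $G^*$; then $r(\lambda_k-\lambda'_k)=(h'-h)r/p$ forces $p\mid(h'-h)$, hence $h=h'$ and $\lambda=\lambda'$. With injectivity repaired, your generating-function computation (which I checked: both sides equal $1/(1-q)^n$) does finish the proof, \emph{except} for a circularity worry you should address: the identity $\sum_{g\in G^*}q^{\fmaj(g)}=[d_1]_q\cdots[d_n]_q$ is itself established in \cite{Ca1} by means of this very bijection, so you must either give an independent proof of that identity or replace the counting step by a direct surjectivity argument (e.g.\ by characterising the partitions arising as $\lambda(g)$ for $|g|=\sigma$: those with $\lambda_i(g)-\lambda_{i+1}(g)\in\{0,\ldots,r\}$, equal to $0$ only when $\sigma(i)<\sigma(i+1)$ and to $r$ only when $\sigma(i)>\sigma(i+1)$, and with $\lambda_n(g)\in\{0,\ldots,r/p-1\}$).
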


For $g\in G^*$ we let $S_g$ be the set of $g$-compatible vectors in $\mathbb N^n$.
\begin{lem}\label{F_g}We have
$$
F_g(x_1,\ldots,x_n)\eqdef\sum_{f\in S_g}x_1^{f_1}\cdots x_n^{f_n}=\frac{x_{|g_1|}^{\lambda_1(g)}\cdots x_{|g_n|}^{\lambda_n(g)}}{(1-x_{|g_1|}^r)(1-x_{|g_1|}^rx_{|g_2|}^r)\cdots(1-x_{|g_1|}^r\cdots x_{|g_{n-1}|}^r)\cdot (1-x_{|g_1|}^{r/p}\cdots x_{|g_n|}^{r/p} )}.$$
\end{lem}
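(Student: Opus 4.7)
The plan is to unpack Lemma \ref{bij} and then perform two independent geometric series summations, one over $\mathcal P_n$ and one over $\{0,1,\ldots,p-1\}$, which will combine to give the stated rational function.

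First I would reindex the monomial. By Lemma \ref{bij}, a vector $f\in S_g$ is determined uniquely by a pair $(\lambda,h)\in\mathcal P_n\times\{0,1,\ldots,p-1\}$ via $f_i=\lambda_{|g^{-1}(i)|}(g)+r\lambda_{|g^{-1}(i)|}+h\frac{r}{p}$. Setting $j=|g^{-1}(i)|$, so that $i=|g_j|$, this becomes $f_{|g_j|}=\lambda_j(g)+r\lambda_j+h\frac{r}{p}$, and hence
\[
   x_1^{f_1}\cdots x_n^{f_n}=\prod_{j=1}^n x_{|g_j|}^{\lambda_j(g)}\cdot\prod_{j=1}^n x_{|g_j|}^{r\lambda_j}\cdot \bigl(x_{|g_1|}\cdots x_{|g_n|}\bigr)^{hr/p}.
\]
The first factor is independent of $(\lambda,h)$ and will form the numerator.

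Next I would sum separately over $h$ and over $\lambda$. Writing $y\eqdef x_{|g_1|}\cdots x_{|g_n|}$, the geometric sum over $h$ gives
\[
   \sum_{h=0}^{p-1}y^{hr/p}=\frac{1-y^r}{1-y^{r/p}}.
\]
For the sum over $\lambda\in\mathcal P_n$, I would use the standard change of variables $\mu_j\eqdef\lambda_j-\lambda_{j+1}$ (with $\lambda_{n+1}\eqdef 0$), which is a bijection between $\mathcal P_n$ and $\mathbb N^n$ and satisfies $\lambda_j=\mu_j+\mu_{j+1}+\cdots+\mu_n$. Then
\[
   \sum_{\lambda\in\mathcal P_n}\prod_{j=1}^n x_{|g_j|}^{r\lambda_j}=\sum_{\mu\in\mathbb N^n}\prod_{k=1}^n\bigl(x_{|g_1|}^r\cdots x_{|g_k|}^r\bigr)^{\mu_k}=\prod_{k=1}^n\frac{1}{1-x_{|g_1|}^r\cdots x_{|g_k|}^r}.
\]

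Finally I would assemble the three pieces and observe the cancellation. Multiplying, the factor $1-y^r=1-x_{|g_1|}^r\cdots x_{|g_n|}^r$ coming from the $h$-sum cancels with the $k=n$ term of the product, leaving exactly
\[
   F_g(x_1,\ldots,x_n)=\frac{x_{|g_1|}^{\lambda_1(g)}\cdots x_{|g_n|}^{\lambda_n(g)}}{(1-x_{|g_1|}^r)(1-x_{|g_1|}^rx_{|g_2|}^r)\cdots(1-x_{|g_1|}^r\cdots x_{|g_{n-1}|}^r)(1-x_{|g_1|}^{r/p}\cdots x_{|g_n|}^{r/p})},
\]
as claimed. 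No real obstacle is expected; the only subtle point is the reindexing step $i\leftrightarrow|g_j|$ that turns the exponent $\lambda_{|g^{-1}(i)|}(g)$ into $\lambda_j(g)$ attached to the variable $x_{|g_j|}$, after which everything reduces to the classical generating function for partitions of length at most $n$ combined with a finite geometric series.
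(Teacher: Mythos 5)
Your proof is correct and follows essentially the same route as the paper: reindex the monomial via the bijection of Lemma \ref{bij}, factor out the numerator, sum the two geometric series (over $\mathcal P_n$ and over $h$) independently, and cancel the factor $1-x_1^r\cdots x_n^r$. The only difference is that you spell out the standard change of variables $\mu_j=\lambda_j-\lambda_{j+1}$ for the partition sum, which the paper leaves implicit.
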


\begin{proof} By Lemma \ref{bij} we have
   \begin{align*}
\sum_{f\in S_g}x_1^{f_1}\cdots x_n^{f_n}&= \sum_{\lambda\in \mathcal P_n} \sum_{h=0}^{p-1}x_{|g_1|}^{\lambda_1(g)+r\lambda_1+h\frac{r}{p}}\cdots  x_{|g_n|}^{\lambda_n(g)+r\lambda_n+h\frac{r}{p}} \\
&= x_{|g_1|}^{\lambda_1(g)}\cdots x_{|g_n|}^{\lambda_n(g)}\sum_{\lambda\in \mathcal P_n}x_{|g_1|}^{r\lambda_1}\cdots x_{|g_n|}^{r\lambda_n}\sum_{h=0}^{p-1}(x_1\cdots x_n)^{h\frac{r}{p}}\\
&=x_{|g_1|}^{\lambda_1(g)}\cdots x_{|g_n|}^{\lambda_n(g)}\frac{1}{(1-x_{|g_1|}^r)(1-x_{|g_1|}^rx_{|g_2|}^r)\cdots(1-x_{|g_1|}^r\cdots x_{|g_{n}|}^r)} \frac{1-x_1^r\cdots x_n^r}{1-x_1^{\frac{r}{p}}\cdots x_n^{\frac{r}{p}}},
   \end{align*}
and the result follows.
\end{proof}
Let $$\mathbb N^n_{(r,p)}\eqdef\{(f_1,\ldots,f_n)\in \mathbb N^n:\, f_1\equiv f_2\equiv \cdots \equiv f_n\equiv h\frac{r}{p}\mod r\textrm{ for some }h=0,1,\ldots,p-1\}$$ 
and $\mathcal A=\{(f_1,\ldots,f_n)\in \mathbb N^n:\, f_1\geq f_2\geq \cdots \geq f_{k} \textrm{ and }(f_1,\ldots,f_{k})\in \mathbb N^{k}_{(r,p)}\}$. We will show that the set $\mathcal A$ consists of all $g$-compatible vectors in $\mathbb N^n$ as $g$ varies in a suitable subset of $G^*$. Before proving this we need the following preliminary result. 

\begin{lem}\label{colori}
Let $g\in G^*$. Then $(\lambda_1(g)+ \lambda_{|g(1)|}(g^{-1}),\ldots,\lambda_n(g)+ \lambda_{|g(n)|}(g^{-1}))\in \mathbb N^n_{(r,p)}$.
\end{lem}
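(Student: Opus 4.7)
The plan is to reduce the claim modulo $r$. Since $\lambda_i(g)=r\cdot h_i(g)+k_i(g)$ by definition, the $i$-th coordinate of the target vector satisfies $\lambda_i(g)+\lambda_{|g(i)|}(g^{-1})\equiv k_i(g)+k_{|g(i)|}(g^{-1})\pmod r$, so it suffices to show that this right-hand side is a single constant of the form $h\,r/p$ modulo $r$, for some $h\in\{0,\ldots,p-1\}$ and independent of $i\in[n]$.

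First I would exploit the quotient structure $G^*=G(r,n)/C_p$. After fixing any representative of $g$ in $G(r,n)$, written as $[\si_1^{z_1},\ldots,\si_n^{z_n}]$, the other representatives of the coset are obtained by simultaneously replacing each $z_i$ with $z_i+h\,r/p$ modulo $r$, for $h\in\{0,\ldots,p-1\}$, because $C_p$ is generated by $[1^{r/p},\ldots,n^{r/p}]$. Consequently every partition $(\lambda_1,\ldots,\lambda_n)\in\mathcal P_n$ realising $g$ must satisfy $\lambda_i\equiv z_i+h\,r/p\pmod r$ with the \emph{same} $h$ in every coordinate $i$; the minimal such partition $k(g)$ therefore singles out one value $h=\alpha(g)$, yielding $k_i(g)\equiv z_i+\alpha(g)\,r/p\pmod r$ for a common $\alpha(g)$.

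Next I would handle $g^{-1}$ by direct computation in $G(r,n)$: if $g(i^0)=\si_i^{z_i}$ then $g^{-1}(\si_i^0)=i^{-z_i}$, so $z_{|g(i)|}(g^{-1})\equiv -z_i\pmod r$. Applying the conclusion of the previous step to $g^{-1}$ gives $k_{|g(i)|}(g^{-1})\equiv -z_i+\alpha(g^{-1})\,r/p\pmod r$. Adding the two congruences, the $z_i$ terms cancel and we obtain
\[
k_i(g)+k_{|g(i)|}(g^{-1})\equiv\bigl(\alpha(g)+\alpha(g^{-1})\bigr)\,r/p\pmod r,
\]
which is independent of $i$ and is of the required form, with $h\equiv\alpha(g)+\alpha(g^{-1})\pmod p$.

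The main obstacle I anticipate is the second step: rigorously verifying that the entrywise-minimality of $k(g)$ is compatible with this ``uniform colour shift'' picture, i.e.\ that the $p$ admissible residue classes for $(k_1(g),\ldots,k_n(g))\pmod r$ are indexed by a single parameter $h\in\{0,\ldots,p-1\}$ rather than by $n$ independent choices. Once that is secured, everything else is the formal cancellation above and the identification of the resulting constant as an element of $\{0,r/p,\ldots,(p-1)r/p\}$.
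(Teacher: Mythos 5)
Your argument is correct and is essentially the paper's own proof: the paper simply computes $g^{-1}g$ as a coloured permutation, whose $i$-th colour is $\lambda_i(g)+\lambda_{|g(i)|}(g^{-1})$, and observes that since this product is the identity of $G^*=G(r,n)/C_p$ these colours must all be congruent to a common $h\,r/p$ modulo $r$ --- which is exactly your cancellation of the $z_i$'s. The obstacle you anticipate is not one: any single partition realizing $g$ reduces modulo $r$ to the colours of a single coset representative, hence determines a single $h$, directly from the definition of the quotient by $C_p$.
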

\begin{proof}
We recall that if $|g|=\si$ then $g=[\si_1^{\lambda_1(g)},\ldots,\si_n^{\lambda_n(g)}]$ and hence also $g^{-1}=[\tau_1^{\lambda_1(g^{-1})}, \ldots,\tau_n^{\lambda_n(g^{-1})}]$, where $\tau=\si^{-1}$. Therefore 
$$
g^{-1}g=[1^{\lambda_1(g)+\lambda_{\si_1}(g^{-1})}, \ldots,n^{\lambda_n(g)+\lambda_{\si_n}(g^{-1})}].
$$
Since this is the identity element in the group $G^*$, we deduce that there exists $h\in \{0,1,\ldots,p-1\}$ such that ${\lambda_i(g)+\lambda_{\si_i}(g^{-1})}\equiv h\frac{r}{p}$ for all $i\in [n]$, and the proof is complete.
\end{proof}
For $k<n$ we let $C_{k}\eqdef\{[\sigma_1^0,\sigma_2^0,\ldots,\sigma_{k}^0,g_{k+1},\ldots,g_n]\in G^*: \,\sigma_1<\cdots<\sigma_{k}\}$. We observe that the subgroup of $G^*$ given by $\{g\in G^*:\, g=[\si_1^{z_1},\ldots,\si_k^{z_k},(k+1)^0,(k+2)^0,\ldots,n^0\}$ is isomorphic to $G(r,k)$ for all $k<n$. Moreover, we may observe that $C_{k}$ contains exactly $p$ representatives for each right coset of $G(r,k)$ in $G^*$ (we prefer here to consider right instead of left cosets to be consistent with the notation and the results in \cite{G} and \cite{Pa} that we are going to generalize). In particular, if $p=1$, we have that $C_{k}$ is a complete system of representatives of the cosets of $G(r,k)$ in $G(r,n)$.
\begin{prop}\label{calA}
   Let $f\in \mathbb N^n$. Then $f\in \mathcal A$ if and only if $f$ is $g^{-1}$-compatible for some $g\in C_{k}$.
\end{prop}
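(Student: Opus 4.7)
The argument proceeds in two directions, relying on the bijection of Lemma~\ref{bij}.

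\emph{Easy direction ($\Leftarrow$).} Let $g=[\sigma_1^0,\ldots,\sigma_k^0,g_{k+1},\ldots,g_n]\in C_k$ and suppose $f$ is $g^{-1}$-compatible, so by Lemma~\ref{bij} there exist $\lambda\in\mathcal P_n$ and $h\in\{0,\ldots,p-1\}$ with $f_i = \lambda_{|g(i)|}(g^{-1}) + r\lambda_{|g(i)|} + h\,r/p$ for all $i$. For $i\leq k$, $|g(i)|=\sigma_i$; moreover since $g(i^0)=\sigma_i^0$ has color $0$, also $g^{-1}(\sigma_i^0)=i^0$ has color $0$, so $\lambda_{\sigma_i}(g^{-1})\equiv 0\pmod r$. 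Hence $f_i\equiv h\,r/p\pmod r$, giving $(f_1,\ldots,f_k)\in\mathbb N^k_{(r,p)}$. The chain $\sigma_1<\cdots<\sigma_k$ combined with the partition inequalities $\lambda_{\sigma_1}\geq\cdots\geq\lambda_{\sigma_k}$ and $\lambda_{\sigma_1}(g^{-1})\geq\cdots\geq\lambda_{\sigma_k}(g^{-1})$ produces $f_1\geq\cdots\geq f_k$. Hence $f\in\mathcal A$.

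\emph{Hard direction ($\Rightarrow$).} Let $f\in\mathcal A$, and by Lemma~\ref{bij} let $(g',\lambda,h)$ be the unique triple with $f$ being $g'$-compatible. Set $g=(g')^{-1}$; the goal is to show $g\in C_k$. First, reducing $f_i$ mod $r$ for $i\leq k$ and using the identity $z_{|g(i)|}(g^{-1})\equiv -z_i(g)\pmod r$ extracted from $g\cdot g^{-1}=\mathrm{id}$, the condition $(f_1,\ldots,f_k)\in\mathbb N^k_{(r,p)}$ forces $z_1(g),\ldots,z_k(g)$ to be constant modulo $r$ and equal to a multiple of $r/p$. Shifting the representative of $g$ in $G(r,n)/C_p$ by the appropriate power of the generator $[1^{r/p},\ldots,n^{r/p}]$ of $C_p$, we may take $z_1(g)=\cdots=z_k(g)=0$. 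Second, suppose for contradiction that $|g(j)|>|g(j+1)|$ for some $j<k$. The inequality $f_j\geq f_{j+1}$ together with the partition property of $\lambda$ and $\lambda(g^{-1})$ forces $\lambda_{|g(j)|}(g^{-1})=\lambda_{|g(j+1)|}(g^{-1})$ and $\lambda_{|g(j)|}=\lambda_{|g(j+1)|}$. Swapping the two color-zero entries $g(j)$ and $g(j+1)$ yields $\tilde g\in G^*$ with $\tilde g\neq g$; compensating for the change in the half-descent structure (hence in $\lambda(\tilde g^{-1})$) by adjusting $\lambda$ to a new partition $\tilde\lambda$, one verifies that $f$ is also $\tilde g^{-1}$-compatible with $(\tilde g,\tilde\lambda,h)$. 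This violates the uniqueness clause of Lemma~\ref{bij}. Hence $|g|$ is strictly increasing on $[k]$, completing $g\in C_k$.

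\emph{Main obstacle.} The delicate point is the partition-check on the adjusted $\tilde\lambda$ after the swap: one must verify that the shift inherited from the change $\lambda(g^{-1})\to\lambda(\tilde g^{-1})$ (which involves tracking precisely which positions of $g^{-1}$ gain or lose a half-descent under the swap, including non-adjacent cases) does not break the weakly decreasing property. The already-established equality $\lambda_{|g(j)|}=\lambda_{|g(j+1)|}$ is what guarantees that the compensating increase of $\tilde\lambda$ occurs exactly where $\lambda$ had no strict decrease, so that $\tilde\lambda\in\mathcal P_n$. This is the only non-routine bookkeeping in the argument; the rest of the proof is a direct unwinding of the definitions.
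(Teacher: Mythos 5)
Your forward direction and the congruence half of your backward direction match the paper's argument in substance, but with one recurring imprecision: in $G^*=G(r,n)/C_p$ the color vector of an element is only defined up to a simultaneous shift by a multiple of $r/p$, so for $p>1$ neither $\lambda_{\sigma_i}(g^{-1})\equiv 0\pmod r$ nor $z_{|g(i)|}(g^{-1})\equiv -z_i(g)\pmod r$ is literally correct. The paper isolates the right statement as Lemma \ref{colori}: the sums $\lambda_i(g)+\lambda_{|g(i)|}(g^{-1})$ are all congruent modulo $r$ to one \emph{common} multiple of $r/p$. This still gives the equivalence of $(f_1,\dots,f_k)\in\mathbb N^k_{(r,p)}$ with $(\lambda_1(g),\dots,\lambda_k(g))\in\mathbb N^k_{(r,p)}$, so this part is repairable.

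The genuine gap is in the swap argument for the monotonicity condition. Having forced $\lambda_{|g(j)|}(g^{-1})=\lambda_{|g(j+1)|}(g^{-1})$ and $\lambda_{|g(j)|}=\lambda_{|g(j+1)|}$ under the assumption $|g(j)|>|g(j+1)|$, you invoke the uniqueness in Lemma \ref{bij} by asserting that $f$ is also $\tilde g^{-1}$-compatible after swapping; but ``one verifies that $f$ is also $\tilde g^{-1}$-compatible'' is exactly the point that needs proof, and it is not done. The difficulty is real: $h_i$ is a tail count of $\HDes$, so a single change in $\HDes(\tilde g^{-1})$ relative to $\HDes(g^{-1})$ perturbs $\lambda_i(\tilde g^{-1})$ at \emph{every} position below it, and the existence of a compensating partition $\tilde\lambda$ consistent with the relabelling by $|\tilde g|$ is never established. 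Moreover the detour is unnecessary: the equality $\lambda_{|g(j)|}(g^{-1})=\lambda_{|g(j+1)|}(g^{-1})$ with $|g(j+1)|<|g(j)|$ forces $\lambda_l(g^{-1})$ to be constant for $|g(j+1)|\le l\le |g(j)|$, hence $h_l(g^{-1})$ and $k_l(g^{-1})$ are constant on that interval; the first says no $l$ there is a homogeneous descent of $g^{-1}$ and the second says the colors $z_l(g^{-1})$ all agree, so $|g^{-1}|$ is strictly increasing on the interval and $j=|g^{-1}(|g(j)|)|>|g^{-1}(|g(j+1)|)|=j+1$, a contradiction. This is the paper's route (the ``moment's thought'' in its Claim 1), and it shows that the configuration in which you perform the swap never occurs; consequently your compatibility claim for $\tilde g$ is a statement about an empty set of cases that can only be justified by a derivation from the (inconsistent) hypotheses --- which is precisely the step your proposal omits.
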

\begin{proof}
 Recall that, if $|g|=\sigma$, then $g=[\sigma_1^{\lambda_1(g)},\sigma_2^{\lambda_2(g)},\ldots,\sigma_n^{\lambda_n(g)}]$. In particular,  $g\in C_{k}$ if and only if $\si_1<\si_2<\cdots<\si_{k}$ and  
\begin{equation}\label{ev}
 (\lambda_1(g), \lambda_2(g),\ldots,\lambda_{k}(g))\in \mathbb N^{k}_{(r,p)}.
\end{equation}
Let $f\in \mathbb N^n$ be $g^{-1}$-compatible for some $g\in G^*$. Then, by Lemma \ref{bij}, there exist $\lambda\in \mathcal P_n$ and $h\in \{0,\ldots,p-1\}$ such that
\begin{equation} \label{fg} f_i=\lambda_{\si_i}(g^{-1})+r\lambda_{\si_i}+h\frac{r}{p}, \qquad 1\leq i\leq k\\
\end{equation}
The proof will follows immediately from the following two claims and  Equation \eqref{ev}.
\begin{itemize}
\item[Claim 1:] $f_1\geq f_2\geq\cdots \geq f_k$ if and only if $\si_1<\si_2<\cdots<\si_k$. Since $\lambda(g^{-1})$ and $\lambda$ are both partitions, it is clear that Equations \eqref{fg} imply that   if  $\si_1<\si_2<\cdots<\si_{k}$ then $f_1\geq f_2 \geq \cdots \geq f_{k}$. For the same reason, if $f_{i}> f_{i+1}$ then $\si_i<\si_{i+1}$. Finally, assume that $f_i=f_{i+1}$. This immediately implies  that $\lambda_{\si_i}(g^{-1})=\lambda_{\si_{i+1}}(g^{-1})$ and a moment's thought based on the definition of the statistics $\lambda_i(g^{-1})$ will show that this also implies $\si_i<\si_{i+1}$.
\item [Claim 2:] $(f_1\geq f_2\geq\cdots \geq f_k)\in \mathbb N^{k}_{(r,p)} $ if and only if $(\lambda_1(g),\ldots,\lambda_{k}(g))\in \mathbb N^{k}_{(r,p)}$. By Equations \eqref{fg} we clearly have that $(f_1,\ldots,f_{k})\in \mathbb N^{k}_{(r,p)}$ if and only if $(\lambda_{|g_1|}(g^{-1}),\lambda_{|g_2|}(g^{-1}),\ldots,\lambda_{|g_{k}|}(g^{-1}))\in \mathbb N^{k}_{(r,p)}$. By Lemma \ref{colori} this is also equivalent to $(\lambda_1(g),\ldots,\lambda_{k}(g))\in \mathbb N^{k}_{(r,p)}$ and the proof is complete.
\end{itemize}
\end{proof}
We are now ready to state and prove the main result of this section.
\begin{thm}\label{grpn}Let $G=G(r,p,n)^*$. Then
$$
\sum_{g\in C_{k}}q^{\fmaj(g^{-1})}=[p]_{q^{kr/p}}[r(k+1)]_q\cdots [r(n-1)]_q[{rn/p}]_q.
$$ 
\end{thm}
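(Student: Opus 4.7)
The plan is a double-counting of the generating function $\Phi(q)\eqdef\sum_{f\in \mathcal A}q^{f_1+\cdots+f_n}$, computed first via Proposition \ref{calA} and Lemma \ref{F_g}, and second directly from the explicit description of $\mathcal A$.

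For the first computation, note that Lemma \ref{bij} implies that each $f\in\mathbb N^n$ is $g$-compatible for a \emph{unique} $g\in G^*$, so combined with Proposition \ref{calA} this gives a disjoint union $\mathcal A=\bigsqcup_{g\in C_k}S_{g^{-1}}$. Specializing $x_1=\cdots=x_n=q$ in Lemma \ref{F_g} and using that $|g^{-1}|$ is a permutation of $[n]$, the denominator telescopes to $(1-q^r)(1-q^{2r})\cdots(1-q^{(n-1)r})(1-q^{nr/p})$ (independent of $g$) while the numerator becomes $q^{\lambda_1(g^{-1})+\cdots+\lambda_n(g^{-1})}=q^{\fmaj(g^{-1})}$. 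Summing over $C_k$ yields
\[
\Phi(q)=\frac{\sum_{g\in C_k}q^{\fmaj(g^{-1})}}{(1-q^r)(1-q^{2r})\cdots(1-q^{(n-1)r})(1-q^{nr/p})}.
\]

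For the second computation, split any $f\in \mathcal A$ into its free tail $(f_{k+1},\ldots,f_n)\in \mathbb N^{n-k}$, contributing $(1-q)^{-(n-k)}$, and its head $(f_1,\ldots,f_k)$, which ranges over partitions whose parts all lie in a common residue class $hr/p$ modulo $r$ for some $h\in\{0,\ldots,p-1\}$. Writing $f_i=hr/p+r\nu_i$ with $\nu_1\geq\cdots\geq\nu_k\geq 0$, the head contribution factors as $\sum_{h=0}^{p-1}q^{khr/p}$ times the generating function of partitions of length at most $k$ in the variable $q^r$, giving
\[
\sum_{f_1\geq\cdots\geq f_k\,:\,(f_1,\ldots,f_k)\in\mathbb N^k_{(r,p)}}q^{f_1+\cdots+f_k}=\frac{[p]_{q^{kr/p}}}{(1-q^r)(1-q^{2r})\cdots(1-q^{kr})}.
\]
Equating the two expressions for $\Phi(q)$ and canceling the common factor $(1-q^r)\cdots(1-q^{kr})$ leaves
\[
\sum_{g\in C_k}q^{\fmaj(g^{-1})}=[p]_{q^{kr/p}}\cdot\frac{(1-q^{(k+1)r})\cdots(1-q^{(n-1)r})(1-q^{nr/p})}{(1-q)^{n-k}},
\]
and rewriting each surviving factor as $[ir]_q$ or $[nr/p]_q$ gives the claimed identity.

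Since the structural preparations are already done in Lemmas \ref{bij}, \ref{F_g}, \ref{colori} and in Proposition \ref{calA}, there is no substantial obstacle: the only real work is the bookkeeping of the two geometric-series evaluations. The one point that requires a moment of care is justifying the disjointness in the decomposition $\mathcal A=\bigsqcup_{g\in C_k}S_{g^{-1}}$, which is however immediate from the bijectivity in Lemma \ref{bij}.
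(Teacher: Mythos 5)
Your proposal is correct and follows essentially the same route as the paper: both compute the generating function $\sum_{f\in\mathcal A}q^{|f|}$ in two ways, once via Proposition \ref{calA} and the specialization $x_1=\cdots=x_n=q$ of Lemma \ref{F_g}, and once directly from the description of $\mathcal A$ (your head contribution $[p]_{q^{kr/p}}/\big((1-q^r)\cdots(1-q^{kr})\big)$ agrees with the paper's $\frac{1}{(1-q^r)\cdots(1-q^{kr})}\cdot\frac{1-q^{kr}}{1-q^{kr/p}}$). The only difference is cosmetic bookkeeping, plus your explicit remark on the disjointness of the union, which the paper leaves implicit.
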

\begin{proof} Consider the formal power series $$G(q)\eqdef\sum_{f\in \mathcal A}q^{|f|},$$
where, if $f=(f_1,\ldots,f_n)$, we let $|f|=f_1+\cdots+f_n$.
We compute the series $G(q)$ in two different ways.
First, by Lemma \ref{F_g} and Proposition \ref{calA} we have
\begin{align*}
G(q)&= \sum_{g\in C_{k}}F_{g^{-1}}(q,\ldots,q) \\
&= \sum_{g\in C_{k}} \frac{q^{\lambda_1(g^{-1})}\cdots q^{\lambda_n(g^{-1})}}{(1-q^r)(1-q^{2r})\cdots (1-q^{r(n-1)} )(1-q^{rn/p})}\\
&= \frac{\sum_{g\in C_{k}}q^{\fmaj(g^{-1})}}{(1-q^r)(1-q^{2r})\cdots  (1-q^{r(n-1)} )(1-q^{rn/p})}.
\end{align*}
Now we compute directly $G(q)$ using the definition of $\mathcal A$. We have
\[
 G(q)=\frac{1}{(1-q^r)(1-q^{2r})\cdots (1-q^{kr})}\frac{(1-q^{kr})}{(1-q^{kr/p})}\frac{1}{(1-q)^{n-k}}
\]
and therefore
$$
\sum_{g\in C_{k}}q^{\fmaj(g^{-1})}=\frac{1-q^{kr}}{1-q^{kr/p}}\frac{(1-q^{(k+1)r})\cdots (1-q^{(n-1)r})(1-q^{nr/p})}{(1-q)^{n-k}}.\qedhere
$$
\end{proof}
\begin{cor}\label{grn}
 If $G=G(r,n)$, then $C_{k}$ is a system of coset representatives for the subgroup $G(r,k)$ and
$$
\sum_{g\in C_{k}}q^{\fmaj(g^{-1})}=[r(k+1)]_q[r(k+2)]_q\cdots [rn]_q.
$$
\end{cor}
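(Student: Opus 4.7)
The plan is to obtain Corollary \ref{grn} as a direct specialization of Theorem \ref{grpn} at $p=1$. Two things need to be checked: that $C_{k}$ really is a system of coset representatives in this case, and that the quantitative formula reduces correctly.

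For the first point, I would invoke the remark made in the paragraph preceding Proposition \ref{calA}: the set $C_k$ always contains exactly $p$ representatives for each right coset of $G(r,k)$ in $G^*$. Specializing to $p=1$, the subgroup $C_p$ in \eqref{def-grpqn} is trivial, so $G(r,1,n)^{*}=G(r,n)/C_{1}=G(r,n)$, and $C_{k}$ contains exactly one representative of each right coset of $G(r,k)$ in $G(r,n)$. This is precisely the coset-representative claim of the corollary.

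For the second point, I would substitute $p=1$ into the formula from Theorem \ref{grpn}. The leading factor becomes $[p]_{q^{kr/p}}=[1]_{q^{kr}}=1$, and the trailing factor $[rn/p]_{q}$ becomes $[rn]_{q}$, so the identity collapses to
\[
\sum_{g\in C_{k}}q^{\fmaj(g^{-1})}=[r(k+1)]_{q}[r(k+2)]_{q}\cdots[r(n-1)]_{q}[rn]_{q},
\]
which is the claimed formula.

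There is no real obstacle here, since all the machinery --- Lemma \ref{bij}, Lemma \ref{F_g}, Proposition \ref{calA}, and the generating-function computation --- has already been done at the level of arbitrary $p$. The corollary is simply the cleanest and most classical case of the general identity, matching the type $A$ flavor of Equation \eqref{greta} with the symmetric group $S_{n}$ replaced by the wreath product $G(r,n)$ and $[i]_{q}$ replaced by $[ri]_{q}$, and is recorded separately for the reader's convenience.
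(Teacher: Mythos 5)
Your proposal is correct and is exactly the argument the paper intends: Corollary \ref{grn} is left without an explicit proof precisely because it is the specialization $p=1$ of Theorem \ref{grpn}, with the coset-representative claim already recorded in the paragraph preceding Proposition \ref{calA}. Your reductions $[p]_{q^{kr/p}}=[1]_{q^{kr}}=1$ and $[rn/p]_q=[rn]_q$ are the right (and only) computations needed.
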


Equation \eqref{greta} has been rediscovered by Panova in \cite{Pa} to present a simple combinatorial proof of a related result that was discovered by Garsia in \cite{G}, using nice relations of these objects with symmetric functions. Corollary \ref{grn} can be easily used to find a natural $G(r,n)$ counterpart of these results. We first need some further notation. If $g\in G(r,n)$ we let
$$
is_z(g)\eqdef \max \{j:\, \exists 1\leq i_1<\cdots<i_j\leq n \textrm{ with } z_{i_1}(g)=\cdots z_{i_j}(g)=z\textrm{ and } |g(i_1)|<\cdots<|g(i_j)|\},
$$
be the maximum length of a homogeneous increasing subsequence of $g$ of color $z$.
Then we let 
$$\Pi_{r,n,k}\eqdef \{g=[\si_1^{0},\ldots,\si_{n-k}^{0},\si_{n-k+1}^{z_{n-k+1}},\ldots,\si_n^{z_n}]\in G(r,n):\si_1<\cdots<\si_{n-k}\textrm{ and }is_0(g)=n-k\}. $$
The following result generalizes  \cite[Theorems 1 and 2]{Pa} and the main results in \cite{G}.
\begin{thm}
   If $n\geq 2k$ we have that
$$
\sum_{g\in \Pi_{r,n,k}}q^{\fmaj(g^{-1})}=\sum_{i=0}^k (-1)^i\binom{k}{i}[r(n-i+1)]_q[r(n-i+2)]_q\cdots[rn]_q.
$$
\end{thm}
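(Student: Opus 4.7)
The plan is to run an inclusion--exclusion over ``extensions'' of the canonical color-$0$ increasing subsequence of the first $n-k$ positions, combined with the $P$-partition-style generating function framework from the proof of Theorem~\ref{grpn}. For each subset $T=\{t_1<t_2<\cdots<t_i\}$ of $\{n-k+1,\ldots,n\}$, I would introduce the auxiliary set
\[
   \mathcal{B}_T \eqdef \{g\in G(r,n) \mid z_j=0 \text{ for all } j\in\{1,\ldots,n-k\}\cup T,\ \sigma_1<\cdots<\sigma_{n-k}<\sigma_{t_1}<\cdots<\sigma_{t_i}\}.
\]
Observe that $\mathcal{B}_\emptyset=C_{n-k}$ and $\mathcal{B}_{\{n-k+1,\ldots,n-k+i\}}=C_{n-k+i}$, so the $\mathcal{B}_T$'s interpolate between the sets of Corollary~\ref{grn}.

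The first main step is to establish, for every $T$ with $|T|=i$, the identity
\[
   \sum_{g\in \mathcal{B}_T}q^{\fmaj(g^{-1})}=[r(n-k+i+1)]_q[r(n-k+i+2)]_q\cdots[rn]_q,
\]
a quantity depending only on $i$, not on $T$. This I would prove by mimicking the computation of $G(q)$ in the proof of Theorem~\ref{grpn}: the set of $f\in\mathbb{N}^n$ that are $g^{-1}$-compatible for some $g\in\mathcal{B}_T$ (via Lemma~\ref{bij}) is characterized, in analogy with Proposition~\ref{calA}, as those $f$ with $f_j\in r\mathbb{N}$ for $j\in\{1,\ldots,n-k\}\cup T$ and such that the subsequence $(f_1,f_2,\ldots,f_{n-k},f_{t_1},f_{t_2},\ldots,f_{t_i})$ is weakly decreasing. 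Since $q^{|f|}$ is symmetric in the coordinate indices, the resulting generating function over this set depends only on how many positions are ``constrained'' (namely $n-k+i$) and how many are free ($k-i$); evaluating it and then multiplying by $\prod_{j=1}^n(1-q^{rj})$ produces the claimed product.

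The second main step is the inclusion--exclusion identity
\[
   \sum_{g\in \Pi_{r,n,k}}q^{\fmaj(g^{-1})}\,=\,\sum_{T\subseteq\{n-k+1,\ldots,n\}}(-1)^{|T|}\sum_{g\in\mathcal{B}_T}q^{\fmaj(g^{-1})}.
\]
The straightforward half is that every $g\in\Pi_{r,n,k}$ lies in $\mathcal{B}_T$ only for $T=\emptyset$: any nonempty $T$ with $g\in\mathcal{B}_T$ would exhibit a color-$0$ increasing subsequence of length $n-k+|T|>n-k$, contradicting $is_0(g)=n-k$. Thus $\Pi$-elements contribute exactly once, with weight $+q^{\fmaj(g^{-1})}$. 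Combining the two steps and collecting the $\binom{k}{i}$ subsets $T$ of each size $i$ yields the stated alternating sum (possibly up to rewriting the sign/product indices).

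The main obstacle is justifying that the contributions of $g\in C_{n-k}\setminus\Pi_{r,n,k}$ cancel in the inclusion--exclusion. Somewhat subtly, the naive ``single-extension'' argument is not sufficient, because an element of $C_{n-k}$ can have $is_0(g)>n-k$ without any single position in $\{n-k+1,\ldots,n\}$ alone extending the subsequence past $\sigma_{n-k}$ (one drops a large $\sigma_i$ from the first half and inserts smaller entries from the second half). The cancellation must therefore be produced by an explicit sign-reversing, $\fmaj(g^{-1})$-preserving involution on the pairs $(g,T)$ with $g\in\mathcal{B}_T$ and $g\notin\Pi_{r,n,k}$; this is where the hypothesis $n\geq 2k$ is used, to guarantee that whenever such an extension exists there is enough room to pick a canonical ``pivot'' position whose toggling on/off of $T$ defines the involution. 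Constructing this involution, in the spirit of Panova's argument in \cite{Pa} for the symmetric group case, is the core combinatorial work of the proof.
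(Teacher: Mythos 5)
Your first step is sound and is a genuine (if routine) extension of the paper's machinery: the characterization of $\bigsqcup_{g\in\mathcal B_T}S_{g^{-1}}$ goes through because the comparison $f_a\geq f_b\Leftrightarrow \sigma_a<\sigma_b$ used in Claim 1 of Proposition \ref{calA} holds for non-adjacent positions as well, and the resulting evaluation $\sum_{g\in\mathcal B_T}q^{\fmaj(g^{-1})}=[r(n-k+i+1)]_q\cdots[rn]_q$ for $|T|=i$ indeed depends only on $|T|$. The gap is entirely in the second step. The identity $\sum_{g\in\Pi_{r,n,k}}q^{\fmaj(g^{-1})}=\sum_T(-1)^{|T|}\sum_{g\in\mathcal B_T}q^{\fmaj(g^{-1})}$ is not an instance of ordinary inclusion--exclusion: for a fixed $g\in C_{n-k}$ the admissible $T$'s form the simplicial complex of increasing colour-$0$ subsequences of the tail lying above $\sigma_{n-k}$, which is in general not a full Boolean lattice, so $\sum_{T:\,g\in\mathcal B_T}(-1)^{|T|}$ can be any integer. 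Concretely, for $r=1$, $n=4$, $k=2$, the element $g=[1,2,4,3]\notin\Pi_{1,4,2}$ admits $T\in\{\emptyset,\{3\},\{4\}\}$ and contributes $-q^{\maj(g^{-1})}=-q^{3}$, while $g=[1,4,2,3]\notin\Pi_{1,4,2}$ admits only $T=\emptyset$ and contributes $+q^{3}$; the identity survives only because such terms cancel \emph{across different group elements}. You correctly diagnose that a sign-reversing, $\fmaj(g^{-1})$-preserving involution on pairs $(g,T)$ that moves $g$ itself is required, but you do not construct it, and the stated role of the hypothesis $n\geq 2k$ (``enough room to pick a canonical pivot'') is not substantiated. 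Since this cancellation \emph{is} the content of the theorem, the proposal reduces the statement to an equally hard combinatorial lemma rather than proving it.

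The paper takes a different route that avoids the involution altogether: following Panova \cite{Pa}, it transports the problem through the generalized Robinson--Schensted correspondence of Stanton and White \cite{SW}, under which $is_0(g)=n-k$ becomes a condition on the shape of the colour-$0$ component and the increasing prefix becomes a condition on the recording tableau; the inclusion--exclusion is then carried out on sets of standard tableaux, where the relevant families are genuinely Boolean and the cancellation is automatic, with $n\geq 2k$ entering through the tableau combinatorics. One further remark: completing your argument would yield $\sum_{i=0}^k(-1)^i\binom{k}{i}[r(n-k+i+1)]_q\cdots[rn]_q$, which differs from the displayed statement by the substitution $i\mapsto k-i$, i.e.\ by an overall factor $(-1)^k$. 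A direct check at $r=1$, $n=3$, $k=1$, where $\Pi_{1,3,1}=\{[1,3,2],[2,3,1]\}$ and the left-hand side is $q+q^2$, shows that your indexing is the correct one and the theorem's display is off by a sign for odd $k$; so that discrepancy is not evidence against your approach, but it does not repair the missing cancellation argument.
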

   The proof of this result for $r=1$ in \cite{Pa} makes use of Equation \eqref{greta} and on a cute use of the principle of inclusion-exclusion on some sets of standard tableaux based on the Robinson-Schensted correspondence. This proof can be generalised to the general case of groups $G(r,n)$ using Corollary \ref{grn} and the generalised version of the Robinson-Schensted correspondence \cite{SW}. This is why we do not present this proof and we refer the reader to \cite[\S 10]{Ca1} for further details on the generalized Robinson-Schensted correspondence. 

We conclude with some open problems arising from this work.
\begin{prob}
   Let $J'=[k]$ and $\mathscr S_{n,k}'\eqdef \textrm{\raisebox{6pt}{$_{J'}$}}\!S_n=\{\si\in S_n:\si^{-1}(1)<\cdots <\si^{-1}(k)\}$. 
Numerical evidence suggests that
$$
\sum_{\si \in \mathscr S_{n,k}'}(-1)^{\inv(\si)}q^{\maj(\si)}=\sum_{u\in \mathscr S_{n,k}}(-1)^{\inv(\si)}q^{\maj(\si)}
$$ 
if $n$ is even or $k$ is odd. These are the same cases for which $s_{n,k}(q)=r_{n,k}(q)$. Give a (possibly bijective) proof of these equidistributions.
\end{prob}
\begin{prob}
   Unify the main results of this work in a unique statement, i.e. compute the polynomials
$$
\sum_{g\in C_{k}}\eps^{\inv(|g|)}q^{\fmaj(g^{-1})},
$$
where $C_{k}$ is a system of coset representatives of $G(r,k)$ in $G(r,n)$ including as particular cases Theorems \ref{main} and Corollary \ref{grn}.
\end{prob}

\noindent \emph{Acknowledgement.} I would like to thank an anonymous referee for suggesting, in his report on a preliminary version of this paper, to look for a possible relation between signed mahonian distributions on parabolic quotients and forests.

\end{document}